\documentclass[a4paper,reqno]{amsart}
\usepackage{amssymb, amsmath, amscd}
\usepackage[final]{graphicx}
\usepackage{wrapfig}
\usepackage{color}
\usepackage{soul}
\date{03 November 2016}


\newtheorem{theorem}{Theorem}[section]
\newtheorem{lemma}[theorem]{Lemma}
\newtheorem{remark}[theorem]{Remark}

\newtheorem{ansatz}[theorem]{Ansatz}
\makeatletter
 \@addtoreset{equation}{section}
\makeatother\begin{document}
\title{Geodesic completeness for Type~$\mathcal{A}$ surfaces}
\author{D. D'Ascanio, P. Gilkey, and P. Pisani}
\address{D'Ascanio: Instituto de F\'isica La Plata, CONICET,
Universidad Nacional de La Plata, CC 67 (1900) La Plata, Argentina.}
\email{dascanio@fisica.unlp.edu.ar}
\address{Gilkey: Mathematics Department, University of Oregon, Eugene OR 97403 USA.}
\email{gilkey@uoregon.edu}
\address{Pisani: Instituto de F\'isica La Plata, CONICET and Departamento de F\'isica, Fa\-cul\-tad de Ciencias Exactas,
	Universidad Nacional de La Plata, CC 67 (1900) La Plata, Argentina.}
\email{pisani@fisica.unlp.edu.ar}
\keywords{Ricci tensor, homogeneous affine surface, geodesic completeness}
\subjclass[2010]{53C21}
\begin{abstract} {Type~$\mathcal{A}$ surfaces are the locally homogeneous affine surfaces which can be locally described by constant Christoffel symbols. We address the issue of the geodesic completeness of these surfaces: we show that some models for Type~$\mathcal{A}$ surfaces are geodesically complete, that some others admit an incomplete geodesic but model geodesically complete surfaces, and that there are also others which do not model any complete surface. Our main result provides a way of determining whether a given set of constant Christoffel symbols can model a complete surface.}
\end{abstract}

\maketitle

\section{Introduction}
An {\it affine surface} is a pair $\mathcal{M}:=(M,\nabla)$ where $\nabla$ is a torsion free connection on the
tangent bundle of a smooth $2$-dimensional surface $M$. Let $\vec x=(x^1,x^2)$ be local coordinates on $M$.
Adopt the {\it Einstein convention} and sum over repeated indices to expand
$\nabla_{\partial_{x^i}}\partial_{x^j}=\Gamma_{ij}{}^k\partial_{x^k}$
in terms of the {\it Christoffel symbols} \,$\Gamma=(\Gamma_{ij}{}^k)$; the condition that
$\nabla$ is torsion free is then equivalent to the symmetry $\Gamma_{ij}{}^k=\Gamma_{ji}{}^k$.
We say that $\mathcal{M}$ is {\it locally homogeneous} if given any two points of $M$, there is the germ of a diffeomorphism
$\Phi$ taking one point to another with $\Phi^*\nabla=\nabla$. The locally homogeneous affine surfaces have
been classified by B. Opozda \cite{Op04}; we refer to T. Arias-Marco and O. Kowalski \cite{AMK08} for the corresponding classification
if the torsion tensor is permitted to be non-zero; see also
 \cite{Du, G-SG, KVOp2, KVOp, Opozda} for related work.
\begin{theorem} [Opozda]\label{T1.1}
Let $\mathcal{M}=(M,\nabla)$ be a locally homogeneous affine surface which is not flat. Then at least one of the following
three possibilities holds, which are not exclusive, and which describe the local geometry:
\smallbreak\noindent{\bf Type~$\mathcal{A}$}: There exist local coordinates $(x^1,x^2)$ so that
$\Gamma_{ij}{}^k=C_{ij}{}^k$ is constant.
\smallbreak\noindent{\bf Type~$\mathcal{B}$:} There exist local coordinates $(x^1,x^2)$ so that
$\Gamma_{ij}{}^k=(x^1)^{-1}C_{ij}{}^k$ where $C_{ij}{}^k$ is constant.
\smallbreak\noindent{\bf Type~$\mathcal{C}$}: $\nabla$ is the Levi-Civita connection of a metric of constant sectional curvature.
\end{theorem}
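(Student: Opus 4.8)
The plan is to pass from the connection to its infinitesimal symmetries. Let $\mathfrak{g}$ denote the Lie algebra of germs of affine Killing vector fields near a point $p$, i.e.\ vector fields $X$ with $\mathcal{L}_X\nabla=0$. A Singer-type argument for affine connections (working, if one prefers, with jets of high enough order in the non-analytic case) shows that local homogeneity is equivalent to the orbit map $\mathfrak{g}\to T_pM$ being surjective at every point; in particular $\dim\mathfrak{g}\ge 2$, and after shrinking $M$ we may assume $\mathfrak{g}$ integrates to a pseudogroup acting locally transitively. The classification is then organized by $\dim\mathfrak{g}$ and, when there is isotropy, by the isotropy representation $\mathfrak{h}\hookrightarrow\mathfrak{gl}(T_pM)$.

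When $\dim\mathfrak{g}=2$ the action is locally simply transitive, so $M$ is, locally, a two-dimensional Lie group $G$ carrying a left-invariant connection, and there are exactly two such $G$ up to isomorphism. If $G$ is abelian we may take the Killing fields to be $\partial_{x^1},\partial_{x^2}$, so $\nabla$ is translation invariant and $\Gamma_{ij}{}^k$ is constant: Type~$\mathcal{A}$. If $G$ is the $ax+b$ group, realizing its Lie algebra in the standard coordinates (Killing fields $\partial_{x^2}$ and $x^1\partial_{x^1}+x^2\partial_{x^2}$) forces $\Gamma_{ij}{}^k=(x^1)^{-1}C_{ij}{}^k$: Type~$\mathcal{B}$.

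When $\dim\mathfrak{g}\ge 3$ there is a nontrivial isotropy subalgebra $\mathfrak{h}$, acting on $T_pM$ and, compatibly, on the curvature tensor and its covariant derivatives. On a surface the whole curvature operator is determined by the Ricci tensor $\rho$, so the problem reduces to a case analysis on the algebraic type of $\rho$ --- its rank, its signature when nondegenerate, and whether its antisymmetric part vanishes --- viewed as an $\mathfrak{h}$-invariant tensor, with $\nabla\rho$ controlled through the second Bianchi identity. If some element of $\mathfrak{h}$ has eigenvalues forcing $\rho$ to be a nonzero multiple of an $\mathfrak{h}$-invariant nondegenerate symmetric form, that form (normalized and, if necessary, averaged over the isotropy) is a parallel metric and $\nabla$ is its Levi-Civita connection, which has constant sectional curvature by homogeneity: Type~$\mathcal{C}$. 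In the remaining ``small isotropy'' cases one instead exhibits a two- or three-dimensional transitive subalgebra with generically trivial isotropy and falls back on the $\dim\mathfrak{g}=2$ analysis to land in Type~$\mathcal{A}$ or Type~$\mathcal{B}$. Flat connections occur in all three cases, which is why the alternatives are not exclusive.

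I expect the third case to be the crux: enumerating the admissible isotropy representations $\mathfrak{h}\hookrightarrow\mathfrak{gl}(2,\mathbb{R})$ and, for each, checking that $\mathfrak{h}$-invariance of $\rho$ together with the Bianchi-controlled $\nabla\rho$ either manufactures a parallel nondegenerate metric (Type~$\mathcal{C}$) or collapses $\dim\mathfrak{g}$ so that a transitive two-dimensional subalgebra survives in the right normal form (Types~$\mathcal{A}$, $\mathcal{B}$). The most delicate subcase is when $\mathfrak{h}$ is one-dimensional and generated by a nilpotent endomorphism: then $\rho$ need not be nondegenerate, no metric is available, and one must work harder to produce the transitive low-dimensional subalgebra and bring its action into the model coordinates.
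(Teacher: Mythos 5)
Note first that the paper does not prove Theorem~\ref{T1.1}: it is quoted from Opozda \cite{Op04} (see also \cite{AMK08}), so there is no internal proof to measure you against. Your outline does follow the strategy of that literature, namely organizing the classification by the dimension of the affine Killing algebra $\mathfrak{g}$ and its isotropy, and your $\dim\mathfrak{g}=2$ case is essentially complete: simultaneously straightening two commuting Killing fields gives constant $\Gamma_{ij}{}^k$ (Type~$\mathcal{A}$), and the normal form $\{\partial_{x^2},\,x^1\partial_{x^1}+x^2\partial_{x^2}\}$ for the non-abelian algebra forces $\Gamma$ to be $x^2$-independent and homogeneous of degree $-1$, hence $\Gamma_{ij}{}^k=(x^1)^{-1}C_{ij}{}^k$ (Type~$\mathcal{B}$).

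There are, however, two genuine gaps. First, the opening reduction --- that local homogeneity in the sense of germs of affine diffeomorphisms produces a Lie algebra of affine Killing fields whose orbit map is onto $T_pM$ --- is a Singer-type theorem for affine connections; it is true but substantive, and you assert it rather than prove it. Second, and more seriously, the whole $\dim\mathfrak{g}\ge3$ analysis, which is where the content of the theorem lives, is deferred rather than carried out, and the one mechanism you do name there is the wrong one: $\nabla\rho$ is not ``controlled through the second Bianchi identity'' (which yields essentially nothing useful on a surface); what forces $\nabla\rho=0$ when the isotropy is rotational (or its split analogue) is that $\nabla\rho$ is an $\mathfrak{h}$-invariant element of $\otimes^3T_p^*M$, a representation with only odd weights and hence no nonzero invariants. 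Granting $\nabla\rho=0$ with $\rho$ symmetric and nondegenerate, uniqueness of the torsion-free metric connection does give Type~$\mathcal{C}$, so that step is fine. But the dichotomy you must actually establish for large $\mathfrak{g}$ is: either some two-dimensional subalgebra still acts locally transitively (reducing to your first case), or none does, and one must show the latter occurs only for the constant-curvature metric models (e.g.\ $\mathfrak{so}(3)$, which has no two-dimensional subalgebras at all, whereas $\mathfrak{sl}(2,\mathbb{R})$ always admits a transitive Borel and so lands in Type~$\mathcal{B}$ as well). Enumerating the admissible pairs $(\mathfrak{g},\mathfrak{h})$ and executing this dichotomy --- in particular the nilpotent one-dimensional isotropy subcase you yourself flag --- is precisely Opozda's proof, and as written your argument does not yet contain it.
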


We say that $\mathcal{M}$ is Type~$\mathcal{A}$ or Type~$\mathcal{B}$
or Type~$\mathcal{C}$ depending on which possibility holds in Theorem~\ref{T1.1}.
There are no surfaces which are both Type~$\mathcal{A}$ and Type~$\mathcal{C}$.
There are surfaces which are both Type~$\mathcal{A}$ and Type~$\mathcal{B}$.
Any surface which is both Type~$\mathcal{B}$ and Type~$\mathcal{C}$ is modeled either on the hyperbolic plane
or on the Lorentzian hyperbolic plane.

Let $\rho(\xi,\eta):=\operatorname{Tr}(\sigma\rightarrow R(\sigma,\xi)\eta)$
be the Ricci tensor. Although in general the Ricci tensor can be non-symmetric for an affine surface, $\rho$ is symmetric if $\mathcal{M}$ is
Type~$\mathcal{A}$. The Ricci tensor encodes all the geometry in dimension $2$; $\rho=0$ if and only if the geometry is flat.
We shall restrict for the remainder of this paper to Type~$\mathcal{A}$ surfaces
and shall consider in a subsequent paper Type~$\mathcal{B}$ surfaces; since we shall always assume $\mathcal{M}$ is not flat,
the Ricci tensor is non-zero. We say that $\mathcal{M}$ is a {\it symmetric space} if $\nabla R=0$ or, equivalently since we are in
the 2-dimensional setting, if $\nabla\rho=0$.
If  $\mathcal{C}:=\{C_{ij}{}^k\}$ is a collection of constants with $C_{ij}{}^k=C_{ji}{}^k$, let
$$
\mathcal{M}_{\mathcal{C}}:=(\mathbb{R}^2,\nabla^{\mathcal{C}})\text{ where }
\nabla^{\mathcal{C}}_{\partial_{x^i}}\partial_{x^j}=C_{ij}{}^k\partial_{x^k}\,.
$$
The translations act transitively on $\mathbb{R}^2$ preserving $\nabla^{\mathcal{C}}$ so this is a homogeneous geometry. Such an affine
surface will be said to be a {\it Type~$\mathcal{A}$ model}.
The general linear group $\operatorname{GL}(2,\mathbb{R})$ acts
on the set of Type~$\mathcal{A}$ models by change of basis or, equivalently, by the linear action on the parameter space
$S^2(\mathbb{R}^2)\otimes \mathbb{R}^2$; two indices are down and one index is up. Let $\rho_{\mathcal{C}}$ be the
associated Ricci tensor. We say that
two Type~$\mathcal{A}$ models $\mathcal{M}_{\mathcal{C}}$ and $\mathcal{M}_{\tilde{\mathcal{C}}}$ are {\it linearly isomorphic} if they are
isomorphic modulo the action of $\operatorname{GL}(2,\mathbb{R})$.  Let $\mathcal{M}=(M,\nabla)$
be a Type~$\mathcal{A}$ affine surface. We can find a Type~$\mathcal{A}$ atlas of open sets $\{U_\alpha\}$, i.e.\ local coordinates
$(x_\alpha^1,x_\alpha^2)$, and constant local Christoffel symbols $\mathcal{C}_\alpha$ so that
$\nabla_{\partial x_\alpha^i}\partial_{x_\alpha^j}=C_{\alpha,ij}{}^k\partial_{x_\alpha^k}$. Since $\mathcal{M}$ is locally homogeneous,
we can take $\mathcal{C}_\alpha=\mathcal{C}$ to be independent of $\alpha$. In this setting, $\mathcal{M}_{\mathcal{C}}$ is said to be a
{\it model} for $\mathcal{M}$. We refer to \cite{BGGP16} for the proof of the following observation:

\begin{lemma}\label{L1.2}
\ \begin{enumerate}
\item If $\mathcal{M}$ is a Type~$\mathcal{A}$ affine surface, then any Type~$\mathcal{A}$ atlas is real analytic.
\item If $\mathcal{M}_{\mathcal{C}}$ and $\mathcal{M}_{\tilde{\mathcal{C}}}$ are Type~$\mathcal{A}$ models with
$\operatorname{Rank}(\rho_{\mathcal{C}})=\operatorname{Rank}(\rho_{\tilde{\mathcal{C}}})=2$, then
 $\mathcal{M}_{\mathcal{C}}$ and $\mathcal{M}_{\tilde{\mathcal{C}}}$ are isomorphic
if and only if they are linearly isomorphic.
\end{enumerate}\end{lemma}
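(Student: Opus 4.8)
These two statements are logically independent; I would prove (1) by an ODE/analyticity argument and (2) by rigidity of the Ricci tensor viewed as a flat metric, not by appealing to (1).

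\emph{Part (1).} Let $\psi$ be a transition function between two charts of a Type~$\mathcal{A}$ atlas, with constant Christoffel symbols $\mathcal{C}$ on the source and $\tilde{\mathcal{C}}$ on the target. The transformation rule for Christoffel symbols, together with the constancy of $\mathcal{C}$ and $\tilde{\mathcal{C}}$, gives in coordinates
\[
\partial_i\partial_j\psi^c=(\partial_k\psi^c)\,C_{ij}{}^k-(\partial_i\psi^a)(\partial_j\psi^b)\,\tilde{C}_{ab}{}^c .
\]
Thus every second derivative of $\psi$ is a polynomial — in particular real-analytic — function of its first derivatives. Treating $(\psi,D\psi)$ as the unknown, this is a real-analytic total differential (Pfaffian) system; a $C^{2}$ solution exists, namely $\psi$ itself, so the integrability conditions hold, and the analytic Frobenius theorem forces $\psi$ to be real-analytic; equivalently, restrict the system to rays issuing from a point and invoke analytic dependence on initial data for analytic ODE. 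Geometrically: geodesics of $\nabla^{\mathcal{C}}$ satisfy the autonomous polynomial system $\ddot\gamma^{k}+C_{ij}{}^k\dot\gamma^{i}\dot\gamma^{j}=0$, so $\exp_p$ is real-analytic in any Type~$\mathcal{A}$ chart and depends analytically on $p$, and the transition map is locally a composition $\exp_p^{\beta}\circ(\exp_p^{\alpha})^{-1}$ of analytic maps. Since real-analyticity is local and the charts were arbitrary, the whole atlas is real-analytic.

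\emph{Part (2).} The implication ``linearly isomorphic $\Rightarrow$ isomorphic'' is immediate, so suppose conversely that $\Phi$ is an affine diffeomorphism with $\Phi^{*}\nabla^{\tilde{\mathcal{C}}}=\nabla^{\mathcal{C}}$. Translations of the target preserve $\nabla^{\tilde{\mathcal{C}}}$ because $\tilde{\mathcal{C}}$ is constant, so after composing with one we may assume $\Phi(0)=0$; put $L:=D\Phi(0)\in\operatorname{GL}(2,\mathbb{R})$. The claim is that $\Phi=L$, which finishes the proof since then $L^{*}\nabla^{\tilde{\mathcal{C}}}=\Phi^{*}\nabla^{\tilde{\mathcal{C}}}=\nabla^{\mathcal{C}}$ exhibits the linear isomorphism. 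The key point is that in a Type~$\mathcal{A}$ chart the curvature tensor is a polynomial in the constant Christoffel symbols, so $\rho_{\mathcal{C}}$ and $\rho_{\tilde{\mathcal{C}}}$ have constant components; being symmetric (as $\mathcal{M}$ is Type~$\mathcal{A}$) and of rank~$2$, each is a \emph{flat} pseudo-Riemannian metric on $\mathbb{R}^{2}$, its Levi-Civita Christoffel symbols vanishing identically because its coefficients are constant. An affine isomorphism pulls back Ricci to Ricci, so $(\rho_{\mathcal{C}})_{ij}=(\partial_i\Phi^{a})(\partial_j\Phi^{b})(\rho_{\tilde{\mathcal{C}}})_{ab}$ with $\rho_{\mathcal{C}}$ and $\rho_{\tilde{\mathcal{C}}}$ both constant. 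Differentiating this identity in $x^{k}$ and using $\partial_i\partial_k\Phi=\partial_k\partial_i\Phi$, the standard ``symmetric in two indices, skew in two indices $\Rightarrow$ zero'' argument gives $(\partial_k\partial_i\Phi^{a})(\rho_{\tilde{\mathcal{C}}})_{ab}(\partial_j\Phi^{b})=0$ for all $i,j,k$; since $D\Phi$ is invertible and $\rho_{\tilde{\mathcal{C}}}$ is nondegenerate, $\partial_k\partial_i\Phi\equiv0$. Hence $D\Phi$ is constant, $\Phi$ is affine, and $\Phi(0)=0$ forces $\Phi=L$.

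The only real content — and the only place the rank~$2$ hypothesis enters — is the reduction ``affine isomorphism of Type~$\mathcal{A}$ models $\Rightarrow$ isometry of the associated constant-coefficient (hence flat) metrics''; after that, the rigidity is the classical fact that an origin-fixing isometry of flat space is linear. The mildly technical step I expect to need care is upgrading the a priori smooth transition map in (1) to a real-analytic one, but this is covered by any of the standard analytic-ODE arguments above. When $\operatorname{Rank}(\rho)\le1$ there is no nondegenerate form to rigidify against and the argument genuinely fails, consistent with the hypothesis in the statement.
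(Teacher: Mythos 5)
Your proof is correct; note that the paper itself offers no argument for this lemma but simply defers to \cite{BGGP16}, so what you have written is an independent, self-contained derivation. For part (1), the identity $\partial_i\partial_j\psi^c=(\partial_k\psi^c)\,C_{ij}{}^k-(\partial_i\psi^a)(\partial_j\psi^b)\,\tilde C_{ab}{}^c$ does place $u:=(\psi,D\psi)$ into a complete first-order analytic system $\partial_ju=F_j(u)$, and a $C^2$ solution of such a system is real analytic (solve the analytic ODEs successively in the two coordinate directions, use analytic dependence on initial data, and invoke uniqueness to identify the result with $\psi$); your alternative phrasing via exponential maps needs the small correction that the transition map is $\exp^\beta_{\psi(p)}\circ D\psi(p)\circ(\exp^\alpha_p)^{-1}$ rather than $\exp^\beta_p\circ(\exp^\alpha_p)^{-1}$, which does not affect the conclusion. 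For part (2), pulling back the constant, symmetric, nondegenerate Ricci tensor and running the symmetric-in-two/skew-in-two braid argument to force $\partial_k\partial_i\Phi=0$ is exactly where the rank-$2$ hypothesis must enter, and your computation is right; the same computation applies verbatim to a germ of an isomorphism, so the conclusion is insensitive to whether ``isomorphic'' is read globally or locally. An alternative route, closer to the machinery the authors use elsewhere in this paper (Theorem 3.4 of \cite{BGGP16}, quoted in the proof of Lemma \ref{L3.1}), is that $\operatorname{Rank}(\rho)=2$ forces $\mathfrak{K}(\mathcal{M}_{\mathcal{C}})=\operatorname{Span}_{\mathbb{R}}\{\partial_{x^1},\partial_{x^2}\}$; an isomorphism carries this two-dimensional abelian Killing algebra to the corresponding one on the target, so $\Phi_*\partial_i$ has constant coefficients and $D\Phi$ is constant directly. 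Both arguments deliver the same rigidity; yours trades the Killing-field input for an explicit tensor computation, and it correctly isolates why the statement fails at rank $\le1$, as the example $\mathcal{M}_1\cong\mathcal{M}_2$ in Lemma \ref{L1.3} shows it must.
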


Introduce the following Type~$\mathcal{A}$ models by giving the corresponding Christoffel symbols; in the interests of brevity
we only list the non-zero Christoffel symbols $C_{ij}{}^k$ for $i\le j$ and set $C_{ji}{}^k:=C_{ij}{}^k$ if $i>j$.
$$\begin{array}{ll}
\mathcal{C}_1:=\{C_{11}{}^1=-1,\ C_{12}{}^1=-\frac12\},&
\mathcal{C}_2:=\{C_{12}{}^1=-\frac12\},\\
\mathcal{C}_3:=\{C_{11}{}^1=-1\ C_{22}{}^1=-1\},&
\mathcal{C}_{\pm1,\delta}:=\{C_{11}{}^2=\pm1,\ C_{12}{}^1=\frac12, C_{12}{}^2=\frac\delta2\}.
\end{array}$$
Denote the corresponding Type~$\mathcal{A}$ models by $\mathcal{M}_1$, $\mathcal{M}_2$, $\mathcal{M}_3$, and $\mathcal{M}_{\pm,\delta}$,
respectively.  By replacing $x^1$ by $-x^1$, we may replace $\mathcal{C}_{\pm1,\delta}$ by $\mathcal{C}_{\pm1,-\delta}$; thus we shall assume
$\delta\ge0$. A direct computation shows $\rho_{\mathcal{C}_1}$ and $\rho_{\mathcal{C}_2}$ are negative semi-definite,
$\rho_{\mathcal{C}_3}$ is positive semi-definite, $\rho_{\mathcal{C}_{+,\delta}}$ is
negative definite, and $\rho_{\mathcal{C}_{-,\delta}}$ is indefinite.
We refer to \cite{BGGP16,BGGP16a} for the proof of the following result; it shows, in particular, that Lemma~\ref{L1.2}~(2) can fail
if the Ricci tensor has rank 1 since $\mathcal{M}_1$ is not linearly isomorphic to $\mathcal{M}_2$ but is isomorphic to $\mathcal{M}_2$.

\begin{lemma}\label{L1.3}
$\mathcal{M}_1$, $\mathcal{M}_2$, and $\mathcal{M}_3$ are symmetric spaces.
Any Type~$\mathcal{A}$ model which is a symmetric space is linearly
isomorphic to $\mathcal{M}_1$, $\mathcal{M}_2$, or $\mathcal{M}_3$.
$\mathcal{M}_i$ is not linearly isomorphic to $\mathcal{M}_j$ for $i\ne j$.
$\mathcal{M}_1$ is isomorphic to $\mathcal{M}_2$ but not to $\mathcal{M}_3$.
\end{lemma}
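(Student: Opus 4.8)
The plan is to establish the four assertions in turn, the second (the classification) being the heart of the matter. Throughout I use that for a Type~$\mathcal{A}$ model the Christoffel symbols are constant, so the curvature is the purely algebraic expression $R_{ijk}{}^{\ell}=C_{im}{}^{\ell}C_{jk}{}^{m}-C_{jm}{}^{\ell}C_{ik}{}^{m}$ and $\rho_{\mathcal{C}}$ is a constant symmetric matrix, quadratic in $\mathcal{C}$. Writing $M_i$ for the matrix $(M_i)^{\ell}{}_{j}:=C_{ij}{}^{\ell}$ (with $i$ fixed), one has $(\nabla_i\rho_{\mathcal{C}})_{jk}=-C_{ij}{}^{\ell}(\rho_{\mathcal{C}})_{\ell k}-C_{ik}{}^{\ell}(\rho_{\mathcal{C}})_{j\ell}$ since $\rho_{\mathcal{C}}$ is constant, so the symmetric space condition $\nabla\rho_{\mathcal{C}}=0$ is equivalent to the algebraic identity $M_i^{T}\rho_{\mathcal{C}}+\rho_{\mathcal{C}}M_i=0$ for $i=1,2$. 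For the first assertion one computes $\rho_{\mathcal{C}}$ explicitly for $\mathcal{C}_1,\mathcal{C}_2,\mathcal{C}_3$ (each turns out to have rank~$1$ with $\ker\rho_{\mathcal{C}}=\operatorname{span}(\partial_{x^1})$, the first two negative semidefinite and the third positive semidefinite) and verifies this identity directly.

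For the classification, the key remark is that $M_i^{T}\rho_{\mathcal{C}}+\rho_{\mathcal{C}}M_i=0$ says exactly that each $M_i$ belongs to the Lie algebra of the subgroup of $\operatorname{GL}(2,\mathbb{R})$ preserving the bilinear form $\rho_{\mathcal{C}}$. I split on $\operatorname{Rank}(\rho_{\mathcal{C}})\in\{1,2\}$ (rank~$0$ being excluded by non-flatness). If $\operatorname{Rank}(\rho_{\mathcal{C}})=2$ this Lie algebra is one-dimensional and spanned by an invertible matrix~$J$ (it consists of those $X$ with $\rho_{\mathcal{C}}X$ skew, and a nonzero skew $2\times2$ matrix is invertible), so $M_1=\alpha J$ and $M_2=\beta J$; but the torsion-free condition $C_{12}{}^{k}=C_{21}{}^{k}$ says the second column of $M_1$ equals the first column of $M_2$, and since the columns of $J$ are independent this forces $\alpha=\beta=0$, i.e. $\mathcal{C}=0$, contradicting non-flatness. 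If $\operatorname{Rank}(\rho_{\mathcal{C}})=1$, I use $\operatorname{GL}(2,\mathbb{R})$ to arrange $\ker\rho_{\mathcal{C}}=\operatorname{span}(\partial_{x^1})$, so $\rho_{\mathcal{C}}=\varepsilon\,dx^2\otimes dx^2$ with $\varepsilon\ne0$; the condition $M_i^{T}\rho_{\mathcal{C}}+\rho_{\mathcal{C}}M_i=0$ then forces $C_{ij}{}^{2}=0$ for all $i,j$, the model is determined by the triple $(p,q,r):=(C_{11}{}^{1},C_{12}{}^{1},C_{22}{}^{1})$, and recomputing the Ricci tensor gives $\rho_{\mathcal{C}}=(pr-q^{2})\,dx^2\otimes dx^2$, whence $pr-q^{2}\ne0$. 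The residual freedom is the Borel subgroup $B\subset\operatorname{GL}(2,\mathbb{R})$ of upper triangular matrices; a short computation of its action on $(p,q,r)$ shows there are exactly three orbits, with representatives $(1,0,1)$, $(1,0,-1)$ and $(0,1,0)$, which are linearly isomorphic to $\mathcal{C}_3$, $\mathcal{C}_1$ and $\mathcal{C}_2$ respectively. The sign of $pr-q^{2}$ and the vanishing of $p$ are both invariant under~$B$, which is what keeps the three orbits distinct.

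The third assertion follows from the same bookkeeping: $\operatorname{GL}(2,\mathbb{R})$ acts on $\rho_{\mathcal{C}}$ by congruence, so the inertia indices of $\rho_{\mathcal{C}}$ are linear invariants, and this separates $\mathcal{M}_3$ (Ricci positive semidefinite) from $\mathcal{M}_1$ and $\mathcal{M}_2$ (Ricci negative semidefinite). To separate $\mathcal{M}_1$ from $\mathcal{M}_2$ I use the $\operatorname{GL}(2,\mathbb{R})$-covariant trace one-form $T_j:=C_{ij}{}^{i}$: the invariant condition ``$T$ annihilates $\ker\rho_{\mathcal{C}}$'' (equivalently, $p=0$ in the normal form above) holds for $\mathcal{M}_2$ but fails for $\mathcal{M}_1$. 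For the fourth assertion, an affine isomorphism $\Phi$ satisfies $\Phi^{*}\rho=\rho$, hence preserves the pointwise inertia type of the Ricci tensor and so cannot carry the negative semidefinite Ricci tensor of $\mathcal{M}_1$ to the positive semidefinite one of $\mathcal{M}_3$; thus $\mathcal{M}_1\not\cong\mathcal{M}_3$. That $\mathcal{M}_1\cong\mathcal{M}_2$ is exhibited by a change of coordinates of the form $\Phi(x^{1},x^{2})=(g(x^{1}),x^{2})$: the Christoffel transformation rule reduces the requirement $\Phi^{*}\nabla^{\mathcal{C}_1}=\nabla^{\mathcal{C}_2}$ to the single ordinary differential equation $g''=(g')^{2}$, solved by $g(x^{1})=-\log|x^{1}|$ on a half-plane, and local homogeneity upgrades this local isomorphism to an isomorphism of the geometries.

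I expect the main difficulty to lie in the rank~$1$ case of the classification: choosing the right normal form, carrying out the orbit analysis of the Borel action on the three-parameter family $(p,q,r)$, and isolating the $\operatorname{GL}(2,\mathbb{R})$-invariants ($\operatorname{sign}(pr-q^{2})$ together with the vanishing of $p$) that guarantee the three representatives $\mathcal{C}_1,\mathcal{C}_2,\mathcal{C}_3$ are pairwise non-isomorphic as linear models. The remaining ingredients---the curvature identity, the rank~$2$ impossibility, and the explicit diffeomorphism witnessing $\mathcal{M}_1\cong\mathcal{M}_2$---are comparatively routine.
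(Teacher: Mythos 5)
Your argument is essentially correct and complete; note that the paper itself gives no proof of Lemma~\ref{L1.3} but defers entirely to \cite{BGGP16,BGGP16a}, so there is no internal proof to compare against. Your route is a clean self-contained one: the observation that for constant Christoffel symbols the condition $\nabla\rho=0$ becomes the linear-algebraic identity $M_i^{T}\rho+\rho M_i=0$ is exactly the right reduction, and I have checked the pieces built on it. The rank-$2$ exclusion (the isotropy algebra of a nondegenerate $\rho$ is spanned by a single invertible $J$, and torsion-freeness forces $M_1=M_2=0$) is correct and rather elegant. In the rank-$1$ case the normalization $\rho=\varepsilon\,dx^2\otimes dx^2$ does force $C_{ij}{}^{2}=0$, the recomputation $\rho=(pr-q^{2})\,dx^2\otimes dx^2$ is right, and the Borel action on $(p,q,r)$ is $p\mapsto a^{-1}p$, $q\mapsto d^{-1}(q-a^{-1}bp)$, $r\mapsto d^{-2}(a^{-1}b^{2}p-2bq+ar)$, which indeed has exactly the three orbits you name, separated by $\operatorname{sign}(pr-q^{2})$ and the vanishing of $p$; since any linear isomorphism between normalized models must preserve $\ker\rho$ and hence lie in $B$, checking $B$-invariance of these two quantities does suffice for pairwise non-isomorphy (your trace one-form $T_j=C_{ij}{}^{i}$ restricted to $\ker\rho$ is the invariant reformulation of ``$p=0$''). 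The computations for $\mathcal{M}_1\not\cong\mathcal{M}_3$ and for $g''=(g')^{2}$, $g=-\log|x^1|$, also check out.

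One small caution on the last assertion: the map $(x^1,x^2)\mapsto(-\log x^1,x^2)$ identifies $\mathcal{M}_1$ with the open half-plane $x^1>0$ inside $\mathcal{M}_2$, so what you obtain (and all that can be true, since by Theorem~\ref{T1.5} $\mathcal{M}_2$ is geodesically complete while $\mathcal{M}_1$ is not) is a \emph{local} isomorphism, not a global affine diffeomorphism of $\mathbb{R}^2$. This is the sense in which the paper uses ``isomorphic'' here --- compare the sentence in the proof of Theorem~\ref{T1.5}~(1c), which reads Lemma~\ref{L1.3} as asserting that $\mathcal{M}_1$ and $\mathcal{M}_2$ are locally isomorphic --- so your final phrase ``upgrades this local isomorphism to an isomorphism of the geometries'' should be read as ``to a local isomorphism at every pair of points,'' which local homogeneity does provide.
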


A curve $\sigma$ is said to be a {\it geodesic} if it satisfies the {\it geodesic equation}
$\nabla_{\dot\sigma}\dot\sigma=0$. In any local coordinate chart, this means that
$$
\ddot x_\alpha^k+C_{ij}{}^k\dot x_\alpha^i\dot x_\alpha^j=0\,.
$$
An affine Type~$\mathcal{A}$ model $\mathcal{M}_{\mathcal{C}}$ is said to be {\it geodesically complete}
if any geodesic extends to have domain all of $\mathbb{R}$;
otherwise $\mathcal{M}_{\mathcal{C}}$ is said to be {\it geodesically incomplete}.
Conversely, $\mathcal{M}_{\mathcal{C}}$ is said to be {\it essentially geodesically complete} if there exists a Type~$\mathcal{A}$
surface which is modeled on $\mathcal{M}_{\mathcal{C}}$ which is geodesically complete;
otherwise $\mathcal{M}_{\mathcal{C}}$ is said to be {\it essentially geodesically incomplete}. It is clear that $\mathcal{M}_{\mathcal{C}}$
is geodesically complete implies $\mathcal{M}_{\mathcal{C}}$ is essentially geodesically complete and similarly
$\mathcal{M}_{\mathcal{C}}$ is essentially geodesically incomplete implies $\mathcal{M}_{\mathcal{C}}$ is geodesically incomplete.
It will follow from Theorem~\ref{T1.5} that up to linear equivalence, the only Type~$\mathcal{A}$ models which are geodesically
incomplete but essentially geodesically complete are $\mathcal{M}_1$ and $\mathcal{M}_3$.

\begin{ansatz}\rm\label{A1.4}
Let $\sigma_{a,b}(t):=(a,b)\cdot\log(t)$. The geodesic equations for a
Type~$\mathcal{A}$ model then reduce to the following pair of quadratic equations:
\begin{equation}\label{E1.a}\begin{array}{l}
a=C_{11}{}^1a^2+2C_{12}{}^1ab+C_{22}{}^1b^2,\\[0.05in]
b=C_{11}{}^2a^2+2C_{12}{}^2ab+C_{22}{}^2b^2\,.
\end{array}\end{equation}
Thus Equation~(\ref{E1.a}) has a solution with $(a,b)\ne(0,0)$ implies the associated Type~$\mathcal{A}$ model is geodesically incomplete;
this is a purely algebraic condition that can be studied using the quadratic formula.
\end{ansatz}

The following is the main result of this paper.

\begin{theorem}\label{T1.5}
Let $\mathcal{M}_{\mathcal{C}}=(\mathbb{R}^2,C_{ij}{}^k)$ be a Type~$\mathcal{A}$ model.
\begin{enumerate}
\item If the Ricci tensor has rank $1$, then:
\begin{enumerate}
\item If $\nabla R\ne0$, then $\mathcal{M}_{\mathcal{C}}$ is essentially geodesically incomplete.
\item $\mathcal{M}_2$ is geodesically complete.
\item $\mathcal{M}_1$ is geodesically incomplete and essentially geodesically complete.
\item $\mathcal{M}_3$ is geodesically incomplete. Let $\tilde{\mathcal{M}}_3$ be defined by taking $\Gamma_{22}{}^1=x^1$ and the remaining Christoffel symbols zero. $\tilde{\mathcal{M}}_3$ is modeled on $\mathcal{M}_3$ and is
geodesically complete. Thus $\mathcal{M}_3$ is essentially
geodesically complete.
\end{enumerate}
\item If the Ricci tensor has rank $2$, then following conditions are equivalent:
\begin{enumerate}
\item $\mathcal{M}_{\mathcal{C}}$ is geodesically incomplete.
\item $\mathcal{M}_{\mathcal{C}}$ is essentially geodesically incomplete.
\item There exists a geodesic of the form $\sigma=(a,b)\log(t)$ with $(a,b)\ne(0,0)$.
\item $\mathcal{M}_{\mathcal{C}}$ is not linearly equivalent to $\mathcal{M}_{-,\delta}$ for $0\le\delta<2$.
\end{enumerate}
\item No two models in the family
$\{\mathcal{M}_2,\mathcal{M}_3,\mathcal{M}_{-,\delta}\}_{0\le\delta<2}$ are locally isomorphic.
\end{enumerate}
\end{theorem}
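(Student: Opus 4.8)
The plan is to use the classification of Type~$\mathcal A$ models up to linear isomorphism from \cite{BGGP16,BGGP16a} to reduce each assertion to a check on a short list of normal forms for $C_{ij}{}^k$, and then to deploy two tools. First, on every normal form the geodesic equation $\ddot x^k+C_{ij}{}^k\dot x^i\dot x^j=0$ decouples: a suitable coordinate, say $x^2$, satisfies a linear equation, after which $\dot x^1$ obeys a Bernoulli/Riccati equation whose solutions are elementary, so the maximal interval of definition can be read off. Second, by Ansatz~\ref{A1.4} the existence of a logarithmic geodesic $(a,b)\log t$ is exactly the solvability of the quadratic system \eqref{E1.a} with $(a,b)\neq(0,0)$, a discriminant computation. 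The subtle point is the upgrade from ``geodesically incomplete'' to ``essentially geodesically incomplete''. For this I would argue that when $\nabla R\neq0$ --- which by Lemma~\ref{L1.3} includes every rank~$2$ model and every rank~$1$ model other than $\mathcal M_1,\mathcal M_2,\mathcal M_3$ --- the only local automorphisms are the translations, so (via the developing map) every surface modeled on $\mathcal M_{\mathcal C}$ embeds into $\mathcal M_{\mathcal C}$ itself and cannot repair an inextensible geodesic; the three symmetric models are the exceptions, where extra symmetry permits completion (as $\tilde{\mathcal M}_3$ illustrates for $\mathcal M_3$).

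For part~(1) I would dispatch (b)--(d) by direct integration. For $\mathcal M_2$ the $x^1$-equation is $\ddot x^1=\dot x^1\dot x^2$ with $x^2$ affine, so $\dot x^1$ is a (possibly constant) exponential and all geodesics are complete. For $\mathcal M_1$ and $\mathcal M_3$ the same reduction gives $\dot u=u(u+\dot x^2)$ and $\dot u=u^2+(\dot x^2)^2$ with $u=\dot x^1$; solutions blow up in finite affine time and $(-\log t,0)$ is inextensible, so both are geodesically incomplete, while essential completeness of $\mathcal M_1$ is then free from $\mathcal M_1\cong\mathcal M_2$ (Lemma~\ref{L1.3}) and~(b). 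For $\tilde{\mathcal M}_3$ one computes that its Ricci tensor is positive semi-definite of rank~$1$ with $\nabla\rho=0$, so by Lemma~\ref{L1.3} it is modeled on $\mathcal M_3$, and its geodesic equation reduces to $\ddot x^1=-(\dot x^2)^2x^1$ (harmonic motion), so it is complete. For (a) I would take each rank~$1$ normal form with $\nabla R\neq0$, exhibit an inextensible geodesic (e.g.\ a logarithmic one via \eqref{E1.a}), and invoke the rigidity principle to conclude essential incompleteness.

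For part~(2) it suffices to prove $(a)\Leftrightarrow(d)$, $(c)\Leftrightarrow(d)$ and $(b)\Leftrightarrow(d)$, since $(b)\Rightarrow(a)$ and $(c)\Rightarrow(a)$ are immediate. The equivalence $(c)\Leftrightarrow(d)$ is the quadratic formula applied to \eqref{E1.a} over the rank~$2$ normal forms $\mathcal C_{\pm,\delta}$: for $\mathcal C_{-,\delta}$ it reads $a=ab$ and $b=-a^2+\delta ab$, forcing $b=1$ and $a^2-\delta a+1=0$, which has a real root iff $\delta\ge2$, whereas for $\mathcal C_{+,\delta}$ one gets $a^2+\delta a-1=0$, always solvable, so all other rank~$2$ models admit a nontrivial solution. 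Then $(a)\Rightarrow(d)$ follows once one shows $\mathcal M_{-,\delta}$ is geodesically complete for $0\le\delta<2$: its geodesic equations are $\dot u=-uv$, $\dot v=u^2-\delta uv$, and setting $p=v/u$ one finds $p$ is governed by $p^2-\delta p+1$, which is bounded below by $1-\tfrac14\delta^2>0$; this keeps $(u,v)$ in a compact set for finite time, so solutions extend to $\mathbb R$. Conversely $(d)\Rightarrow(a)$ follows from $(d)\Rightarrow(c)\Rightarrow(a)$, and $(b)\Rightarrow(d)$ from $(b)\Rightarrow(a)\Rightarrow(d)$. The remaining implication $(d)\Rightarrow(b)$ is where I expect the real difficulty: granted a logarithmic geodesic, one must show \emph{every} Type~$\mathcal A$ surface modeled on $\mathcal M_{\mathcal C}$ is still incomplete, which I would attack using the rank~$2$ rigidity of Lemma~\ref{L1.2}(2) to see that such a surface develops injectively into $\mathcal M_{\mathcal C}$, so that the inextensible geodesic $(a,b)\log t$ --- whose affine parameter lives only on $(0,\infty)$ and which escapes every compact set as $t\to0^+$ --- persists there.

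Finally, for part~(3): the rank and the signature of the symmetric tensor $\rho$ are local invariants, and $\rho_{\mathcal C_2}$ is negative semi-definite of rank~$1$, $\rho_{\mathcal C_3}$ positive semi-definite of rank~$1$, and $\rho_{\mathcal C_{-,\delta}}$ indefinite of rank~$2$ (indeed $\det\rho_{\mathcal C_{-,\delta}}=-\tfrac18$); hence no two of $\mathcal M_2$, $\mathcal M_3$ and the family $\{\mathcal M_{-,\delta}\}$ are locally isomorphic. To separate $\mathcal M_{-,\delta}$ from $\mathcal M_{-,\delta'}$ for $\delta\neq\delta'$ in $[0,2)$, I would note that since $\rho$ is non-degenerate the full contraction of $\nabla\rho\otimes\nabla\rho$ against three copies of $\rho^{-1}$ is a genuine scalar affine invariant, and compute it on $\mathcal C_{-,\delta}$ to obtain a function of $\delta$ that is injective on $[0,2)$; alternatively one quotes that $\{\mathcal C_{-,\delta}\}_{0\le\delta<2}$ is a slice for the $\operatorname{GL}(2,\mathbb R)$-action in the classification, so distinct values of $\delta$ give distinct linear-isomorphism classes and hence, by Lemma~\ref{L1.2}(2), distinct isomorphism classes.
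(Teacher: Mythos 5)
Your outline for (1b), (1c), the discriminant computations on $\mathcal{C}_{\pm,\delta}$, and the signature argument in part (3) agrees with the paper, but two load-bearing steps are wrong or missing. The ``rigidity/developing map'' principle you invoke for (1a) and for $(a)\Rightarrow(b)$ in part (2) is not available in the generality you use it: as stated it would apply verbatim to $\mathcal{M}_1$ and $\mathcal{M}_3$ (which also carry inextensible logarithmic geodesics) and would ``prove'' them essentially incomplete, contradicting (1c)--(1d). The failure mode is that germs of affine transformations of $\mathcal{M}_{\mathcal{C}}$ need not extend to global ones --- the Killing field $e^{x^1}\cos(x^2)\partial_1$ of $\mathcal{M}_3$ has incomplete flow --- so there is no developing map into $\mathcal{M}_{\mathcal{C}}$ in general, and a surface modeled on $\mathcal{M}_{\mathcal{C}}$ need not embed in it ($\tilde{\mathcal{M}}_3$ does not). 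In the rank-$2$ case the idea can be rescued, but only after quoting $\mathfrak{K}(\mathcal{M}_{\mathcal{C}})=\operatorname{Span}\{\partial_1,\partial_2\}$ from Theorem~3.4 of \cite{BGGP16}; the paper then transports the coefficients of $\dot\sigma$ in the global Killing frame by analytic continuation. You never establish this input, and for (1a) --- rank $1$, $\nabla R\ne0$, where the Killing algebra need not reduce to translations --- you have no substitute for the paper's actual device: normalize so $\rho=\rho_{22}\,dx^2\otimes dx^2$, show $C_{22}{}^2\ne0$, and observe that the affine invariant $\rho(\dot\sigma,\dot\sigma)=(tC_{22}{}^2)^{-2}\rho_{22}$ blows up along a logarithmic geodesic and hence cannot be copied into a complete real-analytic surface.

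The second gap is in your $(c)\Leftrightarrow(d)$: you assume every rank-$2$ Type~$\mathcal{A}$ model is linearly equivalent to some $\mathcal{C}_{\pm,\delta}$. That is false --- the rank-$2$ moduli space is two-dimensional (Section~\ref{S4} exhibits the invariants $(\Sigma,\Psi)$), while $\{\mathcal{C}_{\pm,\delta}\}$ is a one-parameter family --- so ``all other rank~$2$ models admit a nontrivial solution'' does not follow from checking $\mathcal{C}_{\pm,\delta}$ alone. The paper's Lemma~\ref{L3.2} is the real content here: after normalizing so that all $C_{ij}{}^k\ne0$ (Lemma~\ref{L3.4}), the absence of a logarithmic geodesic forces $E_1(1,\lambda)$ and $E_2(1,\lambda)$ to share a real root (Lemma~\ref{L3.5}), and that common linear factor is then used in phase space to drive the model into the normal form $\mathcal{M}_{\pm,\delta}$, at which point your discriminant computation applies. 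Two smaller deficiencies: identifying $\tilde{\mathcal{M}}_3$ as modeled on $\mathcal{M}_3$ via ``$\nabla\rho=0$ plus Lemma~\ref{L1.3}'' presupposes that $\tilde{\mathcal{M}}_3$ is a Type~$\mathcal{A}$ surface, which is exactly the delicate point along the line $u^1=0$ (the paper needs the commuting Killing fields and the Frobenius theorem there); and ``$p^2-\delta p+1\ge1-\tfrac14\delta^2$ keeps $(u,v)$ in a compact set'' is not yet a completeness proof for $\mathcal{M}_{-,\delta}$, since in the first quadrant $u$ is increasing and one must rule out finite-time escape to infinity, which the paper does through the monotone-slope analysis quadrant by quadrant.
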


The geodesic structures for $\mathcal{M}_2$,  $\mathcal{M}_{-,\delta}$ for $\delta=0$, $\delta=1.8$,
and $\tilde{\mathcal{M}}_3$ are depicted below. \medbreak\centerline{\includegraphics[height=5cm,keepaspectratio=true]{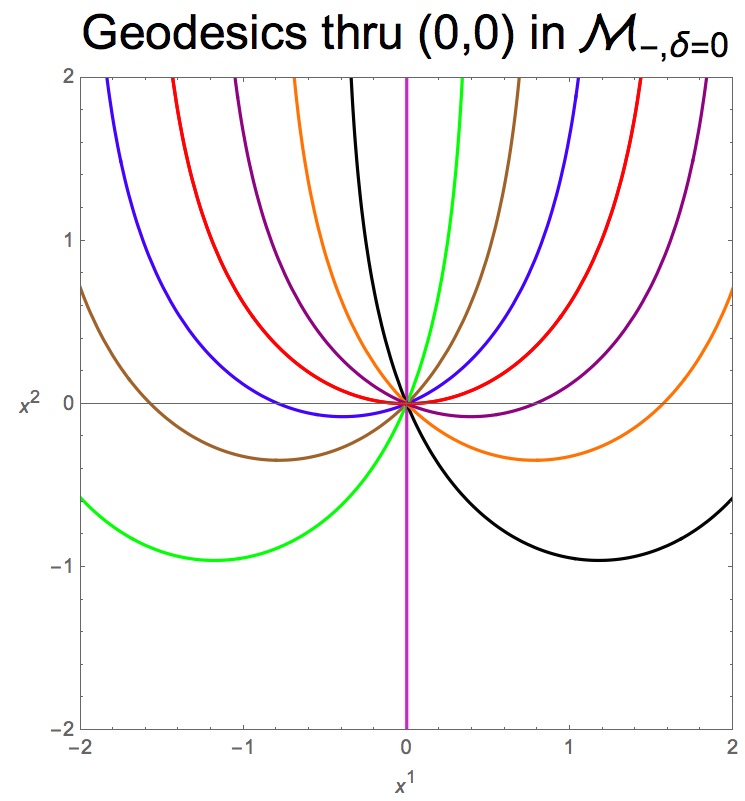}\quad
\includegraphics[height=5cm,keepaspectratio=true]{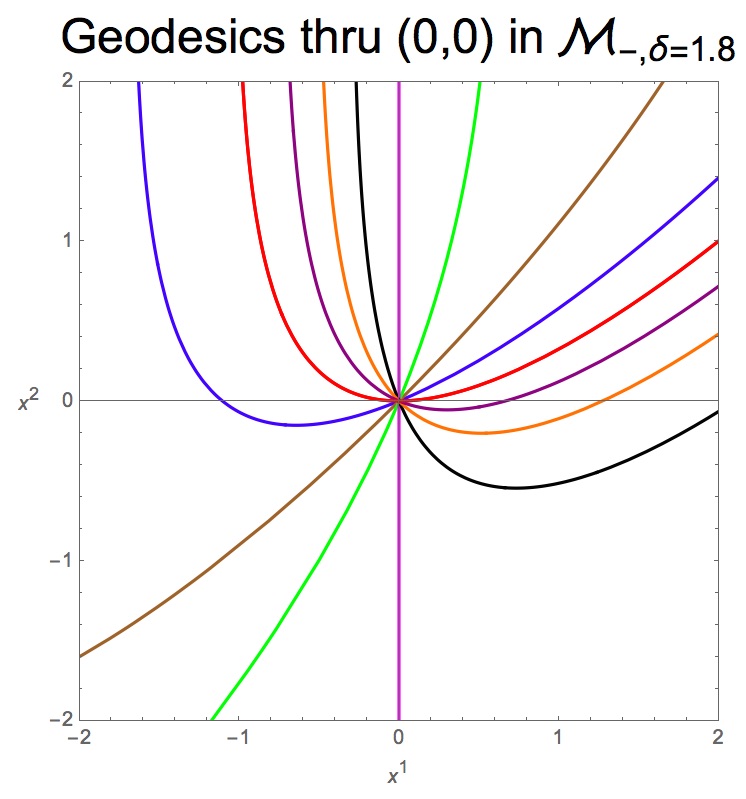}}
\medbreak\centerline{\includegraphics[height=5cm,keepaspectratio=true]{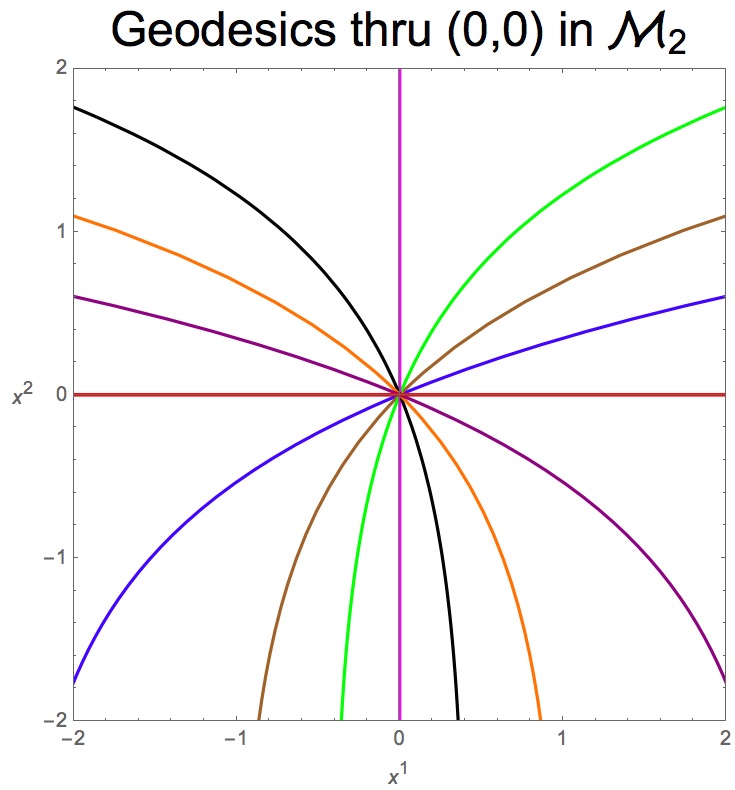}\quad
\includegraphics[height=5cm,keepaspectratio=true]{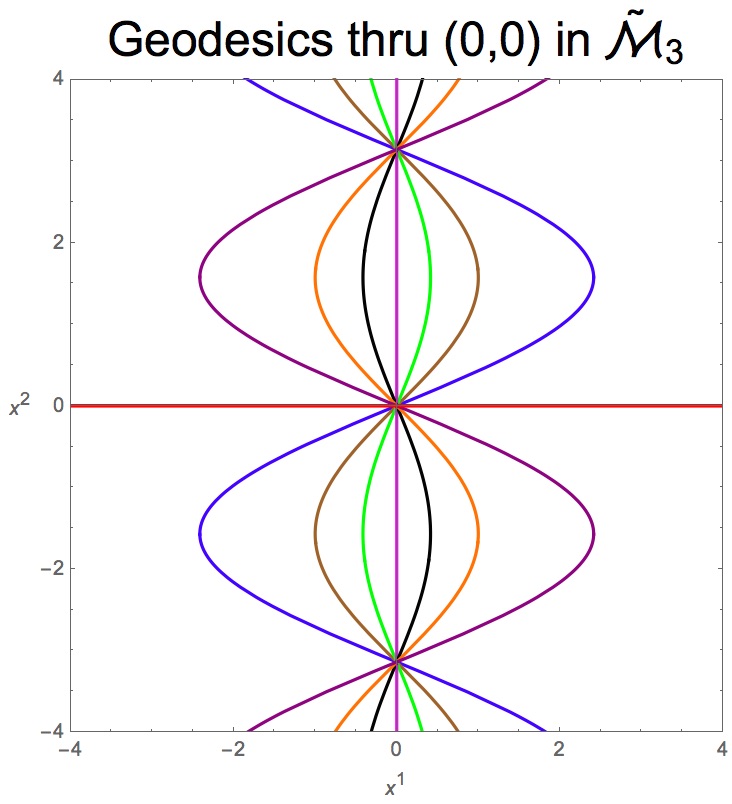}}

This gives rise to a simple algorithm to test if a given model $\mathcal{M}_{\mathcal{C}}$ is essentially
geodesically incomplete.
If the rank of the Ricci tensor is 1, then $\mathcal{M}_{\mathcal{C}}$ is essentially geodesically incomplete if and only if $\nabla\rho\ne0$.
If the rank of the Ricci tensor is 2, $\mathcal{M}$ is essentially geodesically incomplete if and only if there exists a non-trivial
geodesic of the form $\sigma(t)=(a,b)\cdot\log(t)$. Here is a brief outline to this paper.
In Section~\ref{S2} we prove Theorem~\ref{T1.5}~(1), in Section~\ref{S3}, we prove Theorem~\ref{T1.5}~(2),
and in Section~\ref{S4}, we prove Theorem~\ref{T1.5}~(3).

\section{Models where the rank of the Ricci tensor is 1}\label{S2}

We say that $X$ is an {\it affine Killing vector field} on an affine manifold $\mathcal{N}=(N,\nabla)$ if and
only if $\mathcal{L}_X(\nabla)=0$ where $\mathcal{L}$
denotes the Lie derivative; we refer to Kobayashi and Nomizu \cite[Chapter VI]{KN63} for additional characterizations of this condition. If $P\in N$, let
$\mathfrak{K}(P)$ be the space of germs of affine Killing vector fields based at $P$ and let $\mathfrak{K}$ be the space of global affine Killing vector
fields; $\mathfrak{K}(P)$ and $\mathfrak{K}$ are finite dimensional Lie algebras. If $N$ is simply connected and locally homogeneous, every germ of
an affine Killing vector field extends to a global Killing vector field so we may identify $\mathfrak{K}$ with $\mathfrak{K}(P)$ for any $P$.

\subsection{The proof of Theorem~\ref{T1.5}~(1a)}
Suppose that $\mathcal{M}_{\mathcal{C}}$ is a Type-$\mathcal{A}$ model where the Ricci tensor has rank 1 and
that $\mathcal{M}_{\mathcal{C}}$ is not a symmetric space. We argue as follows to show that
$\mathcal{M}_{\mathcal{C}}$ is essentially geodesically incomplete.

By Lemma~2.3 of \cite{BGGP16},  $\operatorname{Rank}(\rho)=1$ implies that we can make
a linear change of coordinates to ensure $C_{11}{}^2=0$, $C_{12}{}^2=0$, and $\rho=\rho_{22}dx^2\otimes dx^2$. Suppose that
$C_{22}{}^2=0$. A direct computation then shows that $\nabla\rho=0$. Consequently $C_{22}{}^2\ne0$. Let
$\sigma(t):=(x^1(t),(C_{22}{}^2)^{-1}\log(t))$. The geodesic equation
$\ddot x^2+C_{22}{}^2\dot x^2\dot x^2=0$ is then satisfied; the resulting ODE for $x^1$ can then be solved,
at least locally. Since $\rho$ has Rank $1$,  $\rho_{22}\ne0$. Let $\kappa(t):=\rho(\dot\sigma,\dot\sigma)=(tC_{22}{}^2)^{-2}\rho_{22}$;
the $x^1$ coordinate plays no role. We suppose there exists a geodesically complete Type~$\mathcal{A}$ affine surface $\mathcal{M}$
which is modeled on  $\mathcal{M}_{\mathcal{C}}$. We argue for a contradiction. Since $\mathcal{M}$ is modeled on $\mathcal{M}_{\mathcal{C}}$,
we can copy a portion of $\sigma(t)$ into $\mathcal{M}$. The function $\kappa(t)$ then extends to a real analytic function on all of $\mathbb{R}$
which is false.\qed

\medbreak Suppose that $\mathcal{M}_C$ is a symmetric space. By Lemma~\ref{L1.3}, $\mathcal{M}_{\mathcal{C}}$ is linearly
isomorphic to $\mathcal{M}_i$ for some $i$ with $1\le i\le 3$. We examine these 3 cases seriatim.

\subsection{The proof of Theorem~\ref{T1.5}~(1b) -- the model $\mathcal{M}_2$}
Since $C_{12}{}^1=-\frac12$ is the only non-zero Christoffel symbol,
the geodesic equations are $\ddot x^2=0$ and $\ddot x^1-\dot x^1\dot x^2=0$. Define a real analytic function on $\mathbb{R}^2$
by setting
$$
h(t;d):=\sum_{n=1}^\infty\frac{d^{n-1}t^n}{n!}=\left\{\begin{array}{ll}t&\text{ if }d=0,\\
\frac{e^{dt}-1}d&\text{ if }d\ne0\end{array}\right\}\,.
$$
We then have $h(0;d)=0$ and $\dot h(t;d)=e^{dt}$ so $\dot h(0;d)=1$. If $\sigma_{a,b,c,d}(t)$ is the solution to the geodesic equations with
$\sigma_{a,b,c,d}(0)=(a,b)$ and $\dot\sigma_{a,b,c,d}(0)=(c,d)$, then
$$
\sigma_{a,b,c,d}(t)=(a,b)+(c\cdot h(t;d),dt)\,.
$$
 Thus this geometry is geodesically complete. This proves Theorem~\ref{T1.5}~(1b). We have $\operatorname{Exp}_{a,b}(c,d)=(a,b)+(c\cdot h(1,d),d)$.
 Since $h(1,d)\ne0$ for any $d$, the exponential map is a diffeomorphism from the tangent space at $(a,b)$ to $\mathbb{R}^2$.  \qed

\subsection{The proof of Theorem~\ref{T1.5}~(1c) -- the model $\mathcal{M}_1$} Since the only non-zero Christoffel symbols are
$C_{11}{}^1=-1$ and $C_{12}{}^1=-\frac12$,
the geodesic equations become $\ddot x^1=\dot x^1\dot x^1+\dot x^1\dot x^2$ and $\ddot x^2=0$. This can be solved by taking
$\sigma(t)=(-1,0)\log t$. Thus this geometry is geodesically incomplete. Note that since $\rho(\dot\sigma,\dot\sigma)=0$,
the argument used to Theorem~\ref{T1.5}~(1a) does not apply. The fact that $\mathcal{M}_1$ and $\mathcal{M}_2$ are
locally isomorphic  follows from Lemma~\ref{L1.3}; thus $\mathcal{M}_1$ is essentially geodesically complete by Theorem~\ref{T1.5}~(1b). \qed

\subsection{The proof of Theorem~\ref{T1.5}~(1d) -- the model $\mathcal{M}_3$} Since the only non-zero Christoffel symbols are
$C_{11}{}^1=-1$ and $C_{22}{}^1=-1$,
the geodesic equations become $\ddot x^1=\dot x^1\dot x^1+\dot x^2\dot x^2$ and $\ddot x^2=0$. As for the geometry
$\mathcal{M}_1$, this can be solved by taking
$\sigma(t)=(-1,0)\log t$. Thus this geometry is geodesically incomplete. We complete the proof by showing $\tilde{\mathcal{M}}_3$ is
geodesically complete and modeled on $\mathcal{M}_3$.

We make a non-linear change of coordinates. Set $u^1=e^{-x^1}$ and $u^2=x^2$.
This defines a diffeomorphism $\Phi$ between $\mathbb{R}^2$
and $\mathbb{R}^+\times\mathbb{R}$. We have
\medbreak\qquad
$du^1=-e^{-x^1}dx^1,\quad du^2=dx^2,\quad\partial_1^u=-e^{x^1}\partial_1,\quad\partial_2^u=\partial_2$,
\medbreak\qquad $\nabla_{\partial_1^u}\partial_1^u= e^{x^1}\nabla_{\partial_1}\{e^{x^1}\partial_1\}
=e^{2x^1}\{(1+{}^x\Gamma_{11}{}^1)\partial_1+{}^x\Gamma_{11}{}^2\partial_2\}$,
\medbreak\qquad $\nabla_{\partial_1^u}\partial_2^u=-e^{x^1}\nabla_{\partial_1}\partial_2
=-e^{x^1}\{{}^x\Gamma_{12}{}^1\partial_1+{}^x\Gamma_{12}{}^2\partial_2\}$,
\medbreak\qquad $\nabla_{\partial_2^u}\partial_2^u=\nabla_{\partial_2}\partial_2
={}^x\Gamma_{22}{}^1\partial_1+{}^x\Gamma_{22}{}^2\partial_2$,
\medbreak\quad $\begin{array}{ll}{}^u\Gamma_{11}{}^1=-(1+{}^x\Gamma_{11}{}^1)\cdot e^{x^1}=0,&
{}^u\Gamma_{11}{}^2={}^x\Gamma_{11}{}^2\cdot e^{2x^1}=0,\\
{}^u\Gamma_{12}{}^1={}^x\Gamma_{12}{}^1=0,&
{}^u\Gamma_{12}{}^2=-{}^x\Gamma_{12}{}^2\cdot e^{x^1}=0,\\
{}^u\Gamma_{22}{}^1=-{}^x\Gamma_{22}{}^1\cdot e^{-x^1}= u^1,&
{}^u\Gamma_{22}{}^2={}^x\Gamma_{22}{}^2=0.
\end{array}$
\medbreak\noindent These are the Christoffel symbols defining $\tilde{\mathcal{M}}_3$. We have
$\tilde{\mathcal{M}}_3$ is isomorphic to $\mathcal{M}_3$ on $\mathbb{R}^+\times\mathbb{R}$ which is the range of the diffeomorphism.
We will complete the proof by showing that $\tilde{\mathcal{M}}_3$ is geodesically complete and that it is locally homogeneous.

The geodesic equations for $\tilde{\mathcal{M}}_3$ become $\ddot u^1+u^1\dot u^2\dot u^2 =0$ and $\ddot u^2=0$.
We solve these equations with initial conditions $\sigma_{a,b,c,d}(0)=(a,b)$ and $\dot\sigma_{a,b,c,d}(0)=(c,d)$ by taking
$$
\sigma_{a,b,c,d}(t):=\left\{\begin{array}{ll}
(a\cos(dt)+\frac cd\sin(dt),b+dt)&\text{ if }d\ne0\\
(a+ct,b)&\text{ if }d=0
\end{array}\right\}\,.
$$
This shows that $\tilde{\mathcal{M}}_3$ is complete. The exponential map at $(a,b)$ takes the form
$$\operatorname{Exp}_{a,b}(c,d)=\left\{\begin{array}{ll}(a\cos(d)+\frac cd\sin(d),b+d)&\text{ if }d\ne0\\
(a+c,b)&\text{ if }d=0\,.\end{array}\right\}
$$
If $d=\pi$, then $\operatorname{Exp}_{a,b}(c,\pi)=(-a,b+\pi)$ and the exponential map is neither 1-1 nor onto.

We use Lemma~3.6 of \cite{BGGP16} to see:
\begin{eqnarray*}
\mathfrak{K}(\mathcal{M}_{3})&=&\operatorname{Span}_{\mathbb{R}}
\{ \partial_1,\partial_2,e^{x^1}\cos(x^2)\partial_1,e^{x^1}\sin(x^2)\partial_1 \}\\
&=&\operatorname{Span}_{\mathbb{R}} \{ u^1\partial_1^u,\partial_2^u,\cos(u^2)\partial_1^u,\sin(u^2)\partial_1^u \}\,.
\end{eqnarray*}
Let
$$\begin{array}{lll}
\xi_1:=u^1 \partial_1^u,&\xi_2:= u^1\partial_1^u+\cos(u^2)\partial_1^u,&\xi_3:= u^1\partial_1^u-\sin(u^2)\partial_1^u,\\[0.05in]
\eta_1:=\partial_2^u,&\eta_2:=\partial_2^u+\sin(u^2)\partial_1^u,&\eta_3:=\partial_2^u+\cos(u^2)\partial_1^u.
\end{array}$$
These are affine Killing vector fields on $\mathbb{R}^+\times\mathbb{R}$. Since the structures are real analytic, these are affine Killing
vector fields on all of $\mathbb{R}^2$.
We verify that $[\xi_i,\eta_i]=0$. Fix $P$. If $ u^1(P)\ne0$, then $\xi_1(P)$ and $\eta_1(P)$ are linearly independent. By the
Frobenius Theorem, we can change to coordinates $w^1, w^2$ such that $\xi_1=\partial_1^w$ and $\eta_1=\partial_2^w$ near $P$. Since $\partial_1^w$ and $\partial_2^w$
are affine Killing vector fields, translations preserve the geometry and the geometry is Type~$\mathcal{A}$ near $P$ [Lemma 1 of \cite{AMK08}]. If $u^1(P)=0$ but $\cos( u^2(P))\ne0$, a similar argument pertains
using $\{\xi_2,\eta_2\}$. Finally, if $u^1(P)=0$ and $\cos(u^2(P))=0$, then necessarily $\sin(u^2(P))\ne0$ and
we can repeat the argument using $\{\xi_3,\eta_3\}$. This shows $\tilde{\mathcal{M}}_3$ is locally homogeneous. Since $\tilde{\mathcal{M}}_3$ is
modeled on $\mathcal{M}_3$ over $\mathbb{R}^+\times\mathbb{R}$, $\tilde{\mathcal{M}}_3$ is modeled on $\mathcal{M}_3$ everywhere.
If we use the diffeomorphism $\Phi$ to identify $\mathcal{M}_3$ with $\mathbb{R}^+\times\mathbb{R}$ in $\tilde{\mathcal{M}_3}$,
then the line $u^1(P)=0$ is the boundary of $\mathcal{M}_3$; this requires the delicate treatment given above.
\qed

\section{Models where the Ricci tensor has rank 2}\label{S3}

Theorem~\ref{T1.5}~(2) will follow from the following assertions:

\begin{lemma}\label{L3.1}
Let $\mathcal{M}_{\mathcal{C}}$ be a Type~$\mathcal{A}$ model with $\operatorname{Rank}(\rho)=2$ such that
 $\mathcal{M}_{\mathcal{C}}$ is geodesically incomplete. Then $\mathcal{M}_{\mathcal{C}}$ is essentially
geodesically incomplete.
\end{lemma}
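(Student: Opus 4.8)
The plan is to argue by contradiction in the spirit of the proof of Theorem~\ref{T1.5}~(1a): if some Type~$\mathcal{A}$ surface $\mathcal{M}=(N,\nabla)$ modeled on $\mathcal{M}_{\mathcal{C}}$ were geodesically complete, we would transplant an incomplete geodesic of $\mathcal{M}_{\mathcal{C}}$ into $\mathcal{M}$ and exhibit a real-analytic curvature invariant along it which develops a pole; this is impossible, since that geodesic is defined on all of $\mathbb{R}$ in $\mathcal{M}$ and so keeps all such invariants finite. What is new compared with~(1a) is that the obvious invariant $\rho(\dot\sigma,\dot\sigma)$ may vanish identically along the relevant geodesic; the hypothesis $\operatorname{Rank}(\rho)=2$, i.e.\ $\det\rho\neq0$, is exactly what lets us produce an invariant with a pole in that degenerate case.

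First I would put the bad geodesic into normal form. Writing the geodesic equation in the velocity $y=\dot x$ turns it into the homogeneous quadratic system $\dot y^{\,k}=-C_{ij}{}^{k}y^{i}y^{j}$; an inextendible geodesic with a finite endpoint corresponds to a finite-time blow-up of $y$, and such a blow-up is asymptotically radial along a direction $w$ with $C(w,w)=-w$. Then $v:=-w$ solves the algebraic system~(\ref{E1.a}), so $\sigma(t):=v\log t$ is itself a geodesic, defined on $(0,\infty)$ and inextendible at $t\to0^{+}$; along it $\dot\sigma=v/t$ is $\nabla$-parallel and $C(v,v)=v$. This step is precisely implication~(c) of Theorem~\ref{T1.5}~(2), and I expect it to be the main obstacle, since the blow-up profile is only asymptotically radial and the ODE analysis must be carried out carefully.

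Next, the invariant. Choose a second $\nabla$-parallel field $F$ along $\sigma$ with $\{v,F(t_{0})\}$ a basis, and put $\phi:=\rho(\dot\sigma,\dot\sigma)$, $\psi:=\rho(\dot\sigma,F)$, $\chi:=\rho(F,F)$, all real-analytic on $(0,\infty)$. Two short computations drive everything. Parallel transport of a volume form along $\sigma$ is governed by the connection trace $T_{i}:=C_{ij}{}^{j}$, which gives $\det\bigl(\dot\sigma(t),F(t)\bigr)=C_{0}\,t^{-T_{i}v^{i}}$ with $C_{0}\neq0$; hence $\{\dot\sigma,F\}$ stays a basis and, writing $\rho$ in it, $\phi\chi-\psi^{2}=\det(\rho)\cdot\bigl(\det(\dot\sigma,F)\bigr)^{2}=\det(\rho)\,C_{0}^{2}\,t^{-2T_{i}v^{i}}$, with $\det\rho\neq0$ by hypothesis. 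On the other hand, using $C(v,v)=v$ and the formula $\rho_{jl}=C_{km}{}^{k}C_{jl}{}^{m}-C_{jm}{}^{k}C_{kl}{}^{m}$, one gets $\rho(v,v)=\operatorname{tr}(M)-\operatorname{tr}(M^{2})$ for the endomorphism $M^{k}{}_{m}:=C_{im}{}^{k}v^{i}$; since $Mv=v$, writing $\nu$ for the other eigenvalue of $M$ we get $\rho(v,v)=\nu(1-\nu)$ and $T_{i}v^{i}=\operatorname{tr}(M)=1+\nu$. Therefore: if $T_{i}v^{i}>0$, the first identity forces one of $\phi,\psi,\chi$ to be unbounded as $t\to0^{+}$; and if $T_{i}v^{i}\le0$, then $\nu\le-1$, so $\rho(v,v)=\nu(1-\nu)\neq0$ and already $\phi=t^{-2}\rho(v,v)$ is unbounded as $t\to0^{+}$. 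In all cases one of $\phi,\psi,\chi$ has a pole at $t=0$.

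Finally, the transplant. Suppose $\mathcal{M}=(N,\nabla)$ is modeled on $\mathcal{M}_{\mathcal{C}}$ and geodesically complete. Fix $t_{0}>0$; a Type~$\mathcal{A}$ chart of $N$ about the corresponding point lets us copy $\sigma$ and $F$ to a geodesic $\tilde\sigma$ and a parallel field $\tilde F$ of $N$ agreeing with them near $t_{0}$, and by completeness $\tilde\sigma$, hence $\tilde F$, extends to all of $\mathbb{R}$. By Lemma~\ref{L1.2}~(1) every object in sight is real-analytic, so $\rho(\dot{\tilde\sigma},\dot{\tilde\sigma})$, $\rho(\dot{\tilde\sigma},\tilde F)$ and $\rho(\tilde F,\tilde F)$ are real-analytic on $\mathbb{R}$; they agree with $\phi,\psi,\chi$ near $t_{0}$, hence on all of $(0,\infty)$ by the identity theorem, so in particular they are finite at $t=0$. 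This contradicts the conclusion of the previous paragraph, so no geodesically complete $\mathcal{M}$ modeled on $\mathcal{M}_{\mathcal{C}}$ exists; that is, $\mathcal{M}_{\mathcal{C}}$ is essentially geodesically incomplete.
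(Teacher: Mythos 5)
Your second and third steps (the pole-producing invariant built from a parallel frame along a $\log$-geodesic, and the analytic-continuation transplant) are sound and are a genuinely different mechanism from the paper's, which never reduces to $\log$-geodesics at all. But the first step is a real gap, and it is not a small one. You assert that a finite-time blow-up of the velocity system $\dot y^k=-C_{ij}{}^k y^iy^j$ is ``asymptotically radial along a direction $w$ with $C(w,w)=-w$,'' so that an incomplete model always carries a geodesic $\sigma(t)=v\log t$. You flag this yourself as ``the main obstacle'' and leave the ODE analysis undone. That assertion is exactly the implication (a)$\Rightarrow$(c) of Theorem~\ref{T1.5}~(2), and in the paper it is the \emph{hardest} part of the whole rank-two case: it is obtained only as the contrapositive of Lemma~\ref{L3.2} combined with the full phase-plane analysis of Lemma~\ref{L3.3} (equivalently, the Bromberg--Medina criterion for completeness of planar homogeneous quadratic fields). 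A blow-up of a homogeneous quadratic system need not select an idempotent direction without further argument -- the radial directions are the real roots of the cubic $E_3(\lambda)$ of Equation~(\ref{E3.c}), and a root may correspond to a null direction $C(w,w)=\mu w$ with $\mu=0$ rather than to an idempotent -- so ``carried out carefully'' here means essentially redoing Lemmas~\ref{L3.2} and~\ref{L3.3}. Since those two lemmas are proved independently of Lemma~\ref{L3.1}, you could legitimately repair your proof by citing them to obtain the $\log$-geodesic; but as written, the reduction is unproven.

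For contrast, the paper's proof of Lemma~\ref{L3.1} sidesteps all of this and works for an \emph{arbitrary} incomplete geodesic: since $\operatorname{Rank}(\rho)=2$ forces the affine Killing algebra to be exactly $\operatorname{Span}\{\partial_{x^1},\partial_{x^2}\}$, any simply connected complete surface modeled on $\mathcal{M}_{\mathcal{C}}$ carries a global Killing frame $\{\xi_1,\xi_2\}$; expanding $\dot\sigma=\kappa_1\xi_1+\kappa_2\xi_2$ and analytically continuing the $\kappa_i$ from the complete surface back to the model shows $\sigma(t)=\sigma(0)+\int_0^t(\kappa_1,\kappa_2)$ extends past any finite endpoint. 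Your invariant-with-a-pole idea is attractive (and your computation $\rho(v,v)=\nu(1-\nu)$, $\operatorname{tr}(M)=1+\nu$ correctly handles the degenerate case $\rho(v,v)=0$ that defeats the naive $\rho(\dot\sigma,\dot\sigma)$ argument of part (1a)), but it only applies after the unproven reduction, so the proposal as it stands does not establish the lemma.
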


\begin{lemma}\label{L3.2}
If $\mathcal{M}_{\mathcal{C}}$ is a Type~$\mathcal{A}$ model with $\operatorname{Rank}\{\rho\}=2$ which
does not admit a geodesic of the form $\sigma(t)=(a,b)\log(t)$ for $(a,b)\ne(0,0)$, then $\mathcal{M}_{\mathcal{C}}$
is linearly isomorphic to $\mathcal{M}_{-,\delta}$ for $0\le\delta<2$.
\end{lemma}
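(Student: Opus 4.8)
The plan is to translate the hypothesis into a normal-form problem for the quadratic map $Q(w):=(q_1(w),q_2(w))$ with $q_k(w):=C_{ij}{}^kw^iw^j$, and then to exhaust the residual $\operatorname{GL}(2,\mathbb{R})$ freedom. By Ansatz~\ref{A1.4} a geodesic $(a,b)\log(t)$ with $(a,b)\ne(0,0)$ exists exactly when $v=Q(v)$ has a non-trivial solution, and rescaling shows this is equivalent to the existence of a direction $[w]$ with $Q(w)\ne0$ and $Q(w)$ proportional to $w$. Introduce the binary cubic $P(w):=w^1q_2(w)-w^2q_1(w)$, whose real roots are precisely the directions with $Q(w)\parallel w$. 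A non-zero real binary cubic has a real root, and $P\equiv0$ forces $Q(w)=\ell(w)\,w$ for some linear form $\ell$, whence either $\ell\equiv0$ and $\mathcal{M}_{\mathcal C}$ is flat (excluded, since $\operatorname{Rank}(\rho)=2$) or $\ell\not\equiv0$ and a prohibited geodesic exists. So there is a direction $[v]$ with $P(v)=0$, and the hypothesis forces $Q(v)=0$, i.e. $q_1(v)=q_2(v)=0$; after a linear change of coordinates with $v=\partial_{x^2}$ we may assume $C_{22}{}^1=C_{22}{}^2=0$.

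With this normalization each $q_k(w)$ is divisible by $w^1$, so $Q(w)=w^1\,Nw$ with $N:=\bigl(\begin{smallmatrix}C_{11}{}^1&2C_{12}{}^1\\ C_{11}{}^2&2C_{12}{}^2\end{smallmatrix}\bigr)$, and a direct computation of the Ricci tensor gives $\det(\rho_{\mathcal C})=-\tfrac12(C_{12}{}^1)^2\det(N)$. Hence $\operatorname{Rank}(\rho)=2$ forces $C_{12}{}^1\ne0$ and $\det(N)\ne0$. Now $Q(w)\parallel w$ with $Q(w)\ne0$ can occur only at a real eigendirection $[w]$ of $N$ with $w^1\ne0$ and non-zero eigenvalue; since $\det(N)\ne0$ every real eigenvalue is non-zero, and $\partial_{x^2}$ is not an eigenvector of $N$ because $C_{12}{}^1\ne0$. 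Thus the hypothesis forces $N$ to have no real eigenvector, i.e. $N$ has a conjugate pair of non-real eigenvalues; equivalently $(\operatorname{tr}N)^2<4\det(N)$, and in particular $\det(N)>0$.

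It remains to run through the stabilizer in $\operatorname{GL}(2,\mathbb{R})$ of the direction $[\partial_{x^2}]$, generated by the shear $x^2\mapsto x^2+qx^1$ and the diagonal scalings $(x^1,x^2)\mapsto(px^1,rx^2)$; all the conditions $C_{22}{}^k=0$ are preserved. Under the shear $C_{11}{}^1\mapsto C_{11}{}^1+2qC_{12}{}^1$ with $C_{12}{}^1$ unchanged, so since $C_{12}{}^1\ne0$ we may arrange $C_{11}{}^1=0$; then $\det(N)=-2C_{12}{}^1C_{11}{}^2$, so $\det(N)>0$ says $C_{12}{}^1$ and $C_{11}{}^2$ are both non-zero of opposite signs. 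A diagonal scaling sends $C_{12}{}^1\mapsto rC_{12}{}^1$ and $C_{11}{}^2\mapsto (p^2/r)C_{11}{}^2$; choosing $r$ of the sign of $C_{12}{}^1$ to make $C_{12}{}^1=\tfrac12$, the factor $p^2/r$ has the sign of $C_{12}{}^1$ and hence the opposite sign to $C_{11}{}^2$, so $(p^2/r)C_{11}{}^2<0$ and we may choose $|p|$ to make $C_{11}{}^2=-1$; this leaves $C_{11}{}^1=0$ and $C_{22}{}^k=0$. The sign of $p$ remains free and affects only $C_{12}{}^2$, which we thus set equal to $\tfrac\delta2$ with $\delta\ge0$. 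This is $\mathcal C_{-,\delta}$, and $(\operatorname{tr}N)^2<4\det(N)$ has become $\delta^2<4$, i.e. $0\le\delta<2$.

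The step I expect to be the main obstacle is this last normalization: keeping the signs straight so that the diagonal scaling yields $\mathcal C_{-,\delta}$ and not $\mathcal C_{+,\delta}$. What makes it work is the equivalence (granted $\operatorname{Rank}(\rho)=2$) between ``there is no geodesic $(a,b)\log(t)$'' and ``$N$ has non-real eigenvalues'', which after the shear that kills $C_{11}{}^1$ is exactly the sign inequality $C_{12}{}^1C_{11}{}^2<0$ that permits $(C_{12}{}^1,C_{11}{}^2)$ to be scaled to $(\tfrac12,-1)$; the family $\mathcal C_{+,\delta}$ is ruled out because there $C_{12}{}^1C_{11}{}^2>0$, so $N$ has a real eigendirection and hence a geodesic of the forbidden form.
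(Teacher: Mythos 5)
Your proof is correct; I checked the key computations. In particular, with $C_{22}{}^1=C_{22}{}^2=0$ one does get $\rho_{22}=-(C_{12}{}^1)^2$, $\rho_{12}=C_{12}{}^1C_{12}{}^2$, $\rho_{11}=C_{11}{}^1C_{12}{}^2-C_{11}{}^2C_{12}{}^1-(C_{12}{}^2)^2$, hence $\det(\rho)=-\tfrac12(C_{12}{}^1)^2\det(N)$, and your sign bookkeeping in the final scaling step is right: the two scaling factors multiply to $p^2>0$, so the sign of $C_{12}{}^1C_{11}{}^2$ is a scaling invariant and the non-real-eigenvalue condition $\det(N)>0$ (after killing $C_{11}{}^1$) is exactly what routes you to $\mathcal{C}_{-,\delta}$ rather than $\mathcal{C}_{+,\delta}$.

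Your route shares the paper's skeleton — failure of the log ansatz forces the two quadratics $q_1,q_2$ to share a real null direction, which is then normalized and the residual freedom exhausted — but the execution is genuinely different and, I would say, tighter. The paper first invokes a genericity lemma (Lemma~\ref{L3.4}) to make every Christoffel symbol non-zero so that the dehomogenized polynomials $E_1(1,\lambda)$, $E_2(1,\lambda)$, $E_3(\lambda)$ have full degree; you work projectively with the binary cubic $P(w)=w^1q_2(w)-w^2q_1(w)$ (the homogenization of $-E_3$), which needs no genericity and handles the degenerate leading coefficients for free, with the case $P\equiv0$ disposed of directly. The paper then passes to phase space, makes two successive changes of variables, and rules out the degenerate subcases by separate rank-one computations of $\rho$; you compress all of that into the single identity $\det(\rho)=-\tfrac12(C_{12}{}^1)^2\det(N)$. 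Finally, where the paper reaches $\mathcal{M}_{\pm,\delta}$ and then eliminates $\mathcal{M}_{+,\delta}$ and $\mathcal{M}_{-,\delta}$ with $\delta\ge2$ by exhibiting explicit $\log$-geodesics via the quadratic formula, you get the same exclusions structurally from the requirement that $N$ have no real eigendirection, i.e.\ $(\operatorname{tr}N)^2<4\det(N)$. What the paper's version buys is reusability: Lemmas~\ref{L3.4} and~\ref{L3.5} are stated so as to serve the other parts of Theorem~\ref{T1.5} as well. What yours buys is a cleaner invariant meaning for the dichotomy ($\delta<2$ is literally the discriminant condition on $N$) and fewer case distinctions.
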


\begin{lemma}\label{L3.3}
If $0\le\delta<2$, then $\mathcal{M}_{-,\delta}$ is geodesically complete.
\end{lemma}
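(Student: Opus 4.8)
The plan is to reduce the geodesic equations of $\mathcal{M}_{-,\delta}$ to a planar autonomous system for the velocity, produce an explicit first integral, and use it to bound the velocity — hence the whole trajectory in phase space — on every finite time interval.

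First I would write out the geodesic equations. For $\mathcal{C}_{-1,\delta}=\{C_{11}{}^2=-1,\ C_{12}{}^1=\tfrac12,\ C_{12}{}^2=\tfrac\delta2\}$ they are $\ddot x^1+\dot x^1\dot x^2=0$ and $\ddot x^2-(\dot x^1)^2+\delta\dot x^1\dot x^2=0$. Putting $p=\dot x^1$ and $q=\dot x^2$ turns these into the quadratic system $\dot p=-pq$, $\dot q=p^2-\delta pq$. The line $\{p=0\}$ is invariant: reading the first equation as a linear ODE for $p$ gives $p(t)=p(0)\exp(-\int_0^t q)$, so $p(0)=0$ forces $p\equiv0$, whence $q$ is constant and the geodesic is $x^1\equiv\text{const}$, $x^2$ affine in $t$ — in particular complete. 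So I may assume $p$ never vanishes, and then $m:=q/p$ is smooth along the trajectory with $\dot m=p(m^2-\delta m+1)$.

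The crux is the following. Set $E:=p^2-\delta pq+q^2$. Because $0\le\delta<2$ this quadratic form is positive definite, so $(1-\tfrac\delta2)(p^2+q^2)\le E\le(1+\tfrac\delta2)(p^2+q^2)$, and a direct computation gives the remarkably simple $\dot E=-\delta p\,E$. With $\beta:=\sqrt{1-\delta^2/4}>0$ one has $m^2-\delta m+1=(m-\tfrac\delta2)^2+\beta^2$, so $\tfrac{d}{dt}\arctan\big(\tfrac{m-\delta/2}{\beta}\big)=\beta p$, and therefore
\[
\frac{d}{dt}\Big(\beta\log E+\delta\arctan\tfrac{m-\delta/2}{\beta}\Big)=\beta(-\delta p)+\delta(\beta p)=0 .
\]
Thus $\beta\log E+\delta\arctan(\cdots)$ is constant along every geodesic; since the $\arctan$ term has absolute value less than $\tfrac{\pi\delta}{2}$, $\log E$ stays in a bounded interval determined by the initial data, so $E$ — and with it $(\dot x^1)^2+(\dot x^2)^2$ — is bounded along the geodesic. (When $\delta=0$ this is trivial: then $E=p^2+q^2$ is itself conserved.)

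The rest is routine. The geodesic flow is a polynomial vector field on $\mathbb{R}^4$ in the coordinates $(x^1,x^2,\dot x^1,\dot x^2)$, so a maximal solution defined only on a finite interval must leave every compact set; but bounded velocity keeps $x^1,x^2$ bounded on any finite interval (when $\delta>0$ one even has $x^1=x^1(0)-\delta^{-1}\log(E/E(0))$ bounded directly), so the trajectory stays in a compact set — contradiction. Hence every geodesic extends to all of $\mathbb{R}$, i.e. $\mathcal{M}_{-,\delta}$ is geodesically complete. I expect the only real obstacle to be the displayed identity: a priori $E$ genuinely can grow (and does, on the half-plane $p<0$), and what saves completeness is the exact cancellation showing that this growth is merely the variation of a bounded angular quantity.
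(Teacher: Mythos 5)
Your proof is correct, and it proceeds by a genuinely different route from the paper's. Your key identities check out: with $p=\dot x^1$, $q=\dot x^2$ the velocity system is $\dot p=-pq$, $\dot q=p^2-\delta pq$; the form $E=p^2-\delta pq+q^2$ (positive definite precisely because $0\le\delta<2$) satisfies $\dot E=-\delta pE$; the slope $m=q/p$ satisfies $\dot m=p\bigl((m-\tfrac\delta2)^2+\beta^2\bigr)$ with $\beta=\sqrt{1-\delta^2/4}>0$; hence $\beta\log E+\delta\arctan\bigl(\tfrac{m-\delta/2}{\beta}\bigr)$ is a first integral, and since the $\arctan$ term is bounded by $\pi/2$ in absolute value, $E$ --- and with it the speed --- is bounded above and below along every geodesic, after which the standard escape lemma finishes the argument (the degenerate case $p\equiv0$ being correctly disposed of first, which also guarantees $E>0$ so that $\log E$ makes sense). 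The paper instead performs a qualitative phase-plane analysis in the variables $u=-\dot x^1$, $v=-\dot x^2$: it shows the slope $\alpha=\dot v/\dot u$ of the flow curves is strictly decreasing where $u>0$ (using exactly the positive definiteness of your form $E$, which is where the hypothesis $\delta<2$ enters in both arguments), and then rules out finite-time escape to infinity quadrant by quadrant via monotonicity and trapping regions. Your first-integral argument is shorter and more quantitative --- it confines each velocity trajectory to an explicit compact region and makes transparent why $\delta=2$ is the threshold, since $\beta\to0$ and the integral degenerates there --- while the paper's argument exhibits the global dynamical picture (the repelling and attracting halves of the vertical axis) without requiring one to guess a conserved quantity, and runs parallel to the Bromberg--Medina completeness criterion for quadratic vector fields cited in the paper's remark.
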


The remainder of this Section is devoted to the proof of these results; throughout it,
let $\mathcal{M}_{\mathcal{C}}$ be a Type~$\mathcal{A}$ affine model whose Ricci tensor has rank $2$.

\subsection{A genericity result}
We will use the following result to normalize our coordinate systems; this will simplify subsequent arguments
by avoiding the necessity to consider special cases.

\begin{lemma}\label{L3.4}
Suppose that $\operatorname{Rank}(\rho)=2$. Then we can change coordinates to ensure that $C_{ij}{}^k\ne0$ for all $i$, $j$, and $k$.\end{lemma}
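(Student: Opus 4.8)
The plan is to show that the six coordinate functions $A\mapsto\tilde C_{ij}{}^k(A)$ on $\operatorname{GL}(2,\mathbb{R})$, giving the Christoffel symbols after the linear change of coordinates $A$, are all non-zero somewhere simultaneously. Since we only use linear changes and the $C_{ij}{}^k$ are constant, the second-derivative term in the transformation law of a connection vanishes, so the $C_{ij}{}^k$ transform as the components of a $(1,2)$-tensor; hence each $(\det A)^2\,\tilde C_{ij}{}^k(A)$ is a polynomial in the entries of $A$. If I can show that none of these six polynomials is identically zero, then their product with $\det A$ is a non-zero polynomial on $\mathbb{R}^4$, hence non-vanishing at some $A$, and for that $A$ the matrix is invertible and all $\tilde C_{ij}{}^k(A)\neq0$, which is what is wanted. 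To analyze the six cases it is convenient to view $\mathcal{C}$ as a symmetric bilinear map $C\colon V\times V\to V$ with $V=\mathbb{R}^2$, so that in a basis $\{e_1,e_2\}$ one has $C(e_i,e_j)=\tilde C_{ij}{}^1 e_1+\tilde C_{ij}{}^2 e_2$; the identical vanishing of $\tilde C_{ij}{}^k$ then says precisely that the $e_k$-component of $C(e_i,e_j)$ is zero for every ordered basis $\{e_1,e_2\}$ of $V$.

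First I would dispose of the four triples that only use non-flatness. If $\tilde C_{12}{}^1\equiv0$, then $C(e_1,e_2)\in\mathbb{R}e_2$ for every basis; fixing $e_2=w\neq0$ and letting $e_1$ range over a complement of $\mathbb{R}w$, linearity of $v\mapsto C(v,w)$ forces $C(v,w)\in\mathbb{R}w$ for all $v$, and by symmetry $C(v,w)\in\mathbb{R}v$ as well, so $C(v,w)=0$ whenever $v,w$ are independent. Choosing $v_0$ with $C(v_0,v_0)\neq0$ (which exists unless $C\equiv0$) and $w$ independent of $v_0$, expanding $0=C(v_0+\varepsilon w,v_0)=C(v_0,v_0)+\varepsilon\,C(w,v_0)$ for $\varepsilon\neq0$ gives $C(v_0,v_0)=0$, a contradiction; so $\tilde C_{12}{}^1\not\equiv0$, and interchanging $e_1$ and $e_2$ gives $\tilde C_{12}{}^2\not\equiv0$. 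Next, $\tilde C_{11}{}^1\equiv0$ would say that for every $v$ and every complement $e_2$ of $\mathbb{R}v$ the $v$-component of $C(v,v)$ in the basis $(v,e_2)$ vanishes, i.e.\ $C(v,v)\in\mathbb{R}e_2$; since the lines $\mathbb{R}e_2$ over all such $e_2$ intersect in $\{0\}$, this forces $C(v,v)=0$ for all $v$, again flatness. Hence $\tilde C_{11}{}^1\not\equiv0$, and likewise $\tilde C_{22}{}^2\not\equiv0$.

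The last two triples, $\tilde C_{11}{}^2$ and $\tilde C_{22}{}^1$, are where the hypothesis $\operatorname{Rank}(\rho)=2$ is used, and this is the step I expect to be the main obstacle. Suppose $\tilde C_{11}{}^2\equiv0$; then $C(v,v)\in\mathbb{R}v$ for all $v\in V$. Comparing coefficients in the resulting cubic identity shows $C_{11}{}^2=C_{22}{}^1=0$ and $C_{ij}{}^k=\tfrac12(\delta_i^k q_j+\delta_j^k q_i)$ for the $1$-form $q:=(C_{11}{}^1,\,2C_{12}{}^1)$ — geometrically, such a $\mathcal{C}$ is projectively flat. For a connection of this form a direct evaluation of $\rho_{jl}=C_{im}{}^i C_{jl}{}^m-C_{jm}{}^i C_{il}{}^m$ yields $\rho=\tfrac14\,q\otimes q$, which has rank at most $1$; this contradicts $\operatorname{Rank}(\rho)=2$. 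Therefore $\tilde C_{11}{}^2\not\equiv0$, and interchanging $e_1$ and $e_2$ gives $\tilde C_{22}{}^1\not\equiv0$. With all six coordinate functions shown to be not identically zero, the reduction of the first paragraph completes the proof. The only genuinely computational points are recognizing that $C(v,v)\parallel v$ for all $v$ forces the normal form above and then checking $\rho=\tfrac14\,q\otimes q$; everything else is elementary bookkeeping with the bilinear map $C$.
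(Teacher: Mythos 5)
Your proof is correct. The overall organization differs from the paper's: you treat all six components symmetrically, showing each function $A\mapsto\tilde C_{ij}{}^k(A)$ on $\operatorname{GL}(2,\mathbb{R})$ is not identically zero and then invoking the fact that a product of non-zero polynomials over $\mathbb{R}$ cannot vanish identically, whereas the paper runs a two-step argument with one-parameter shear families: first it shows a single shear $T_\varepsilon(x^1,x^2)=(x^1+\varepsilon x^2,x^2)$ makes $C_{22}{}^1\ne0$ for all but at most three values of $\varepsilon$, and then, with $C_{22}{}^1\ne0$ in hand, observes that under the transverse shear $S_\varepsilon$ all six Christoffel symbols are non-trivial polynomials in $\varepsilon$ (each has $C_{22}{}^1$ as a leading coefficient), so a generic $\varepsilon$ works. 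The crux is nonetheless the same in both treatments: the identical vanishing of the ``off-diagonal'' component forces exactly the relations $C_{11}{}^2=C_{22}{}^1=0$, $C_{11}{}^1=2C_{12}{}^2$, $C_{22}{}^2=2C_{12}{}^1$ (the paper reads these off from the vanishing of a cubic in $\varepsilon$, you from the cubic identity expressing $C(v,v)\parallel v$ --- these are the same cubic), and then the Ricci tensor is a rank-one square, contradicting $\operatorname{Rank}(\rho)=2$; your normal form $C_{ij}{}^k=\tfrac12(\delta_i^kq_j+\delta_j^kq_i)$ with $\rho=\tfrac14\,q\otimes q$ agrees with the paper's matrix computation with $q=(2C_{12}{}^2,2C_{12}{}^1)$. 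What your version buys is a coordinate-free reading of each degeneration (e.g.\ $\tilde C_{11}{}^2\equiv0$ means the quadratic map $v\mapsto C(v,v)$ preserves every line through the origin, i.e.\ projective flatness) at the cost of more case analysis; the paper's version is more economical because after securing one non-zero symbol the remaining five come for free, but it obscures why only the two components $C_{11}{}^2,C_{22}{}^1$ genuinely need the rank hypothesis while the other four need only non-flatness.
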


\begin{proof} We want to show that there exists $T\in\operatorname{GL}(2,\mathbb{R})$ such that $(T^*C)_{ij}{}^k \neq 0$ for all $i$, $j$ and $k$. Suppose to the contrary that $(T^*C)_{22}{}^1=0$ for all $T\in\operatorname{GL}(2,\mathbb{R})$.
Set $T_\varepsilon(x^1,x^2):=(x^1+\varepsilon x^2,x^2)=(w^1,w^2)$. We compute
\begin{eqnarray*}
&&dw^1=dx^1+\varepsilon dx^2,\quad dw^2=dx^2,\quad\partial_{w^1}=\partial_{x^1},\quad\partial_{w^2}=-\varepsilon\partial_{x^1}+\partial_{x^2},\\
&&(T_\varepsilon^*C)_{22}{}^1=C(\partial_{w^2},\partial_{w^2},dw^1)\\
&&\quad=C_{22}{}^1+\varepsilon(C_{22}{}^2-2C_{12}{}^1)+\varepsilon^2(C_{11}{}^1-2C_{12}{}^2)
+ \varepsilon^3C_{11}{}^2\,.
\end{eqnarray*}
Since a non-trivial polynomial of degree at most $3$ has at most 3 roots, we obtain:
$$
C_{22}{}^1=0,\quad C_{22}{}^2=2C_{12}{}^1,\quad C_{11}{}^1=2C_{12}{}^2,\quad C_{11}{}^2=0\,.
$$
We compute
$$\rho=\left(\begin{array}{ll}
(C_{12}{}^2)^2&C_{12}{}^1C_{12}{}^2\\C_{12}{}^1C_{12}{}^2&(C_{12}{}^1)^2
\end{array}\right)\,.
$$
Thus $\det(\rho)=0$ so the Ricci tensor does not have rank 2 which is false.
Thus we obtain $(T_\varepsilon^*C)_{22}{}^1=0$ for at most 3 values of $\varepsilon$. Change coordinates
to ensure $C_{22}{}^1\ne0$. Let $S_\varepsilon(x^1,x^2):=(x^1,\varepsilon x^1+x^2)=(u^1,u^2)$. We compute
\begin{eqnarray*}
&&du^1=dx^1,\quad du^2=\varepsilon dx^1+dx^2,\quad\partial_{u^1}=\partial_{x^1}-\varepsilon\partial_{x^2},\quad\partial_{u^2}=\partial_{x^2},\\
&&(S_\varepsilon^*C)_{22}{}^1=C_{22}{}^1,\quad(S_\varepsilon^*C)_{22}{}^2=C_{22}{}^2+\varepsilon C_{22}{}^1,\\
&&(S_\varepsilon^*C)_{12}{}^1=C_{12}{}^1-\varepsilon C_{22}{}^1,\\
&&(S_\varepsilon^*C)_{12}{}^2=C_{12}{}^2+\varepsilon(C_{12}{}^1- C_{22}{}^2)-\varepsilon^2C_{22}{}^1,\\
&&(S_\varepsilon^*C)_{11}{}^1=C_{11}{}^1-2\varepsilon C_{12}{}^1+\varepsilon^2C_{22}{}^1,\\
&&(S_\varepsilon^*C)_{11}{}^2=C_{11}{}^2+\varepsilon(C_{11}{}^1-2C_{12}{}^2)+\varepsilon^2(C_{22}{}^2-2C_{12}{}^1)+\varepsilon^3C_{22}{}^1\,.
\end{eqnarray*}
Since $C_{22}{}^1\ne0$, all these polynomials are non-trivial. There are only a finite number of zeros of each of these polynomials. Thus
for generic $\varepsilon$, all the Christoffel symbols are non-zero.
\end{proof}

\subsection{Geodesics of the form $\sigma(t)=(a,b)\log(t)$ for $(a,b)\ne(0,0)$}
The geodesic equations for the curve $\sigma$ are given in Equation~(\ref{E1.a}).
If we can find a solution to these equations with $(a,b)\ne(0,0)$, then $\mathcal{M}$ is geodesically incomplete. Set
$$
E_i(a,b):=a^2C_{11}{}^i+2abC_{12}{}^i+b^2C_{22}{}^i
$$
to rewrite Equation~(\ref{E1.a}) in the form:
\begin{equation}\label{E3.a}
a=E_1(a,b)\text{ and }b=E_2(a,b)\,.
\end{equation}
We assume $a\ne0$ and set $b=\lambda a$. Equation~(\ref{E3.a}) becomes
$$
a=a^2E_1(1,\lambda)\text{ and }a\lambda=a^2E_2(1,\lambda) \text{ i.e. }1=aE_1(1,\lambda)\text{ and }\lambda=aE_2(1,\lambda)\,.
$$
We eliminate $a$ in these equations to obtain
$a=\frac1{E_1(1,\lambda)}$ and $a=\frac\lambda{E_2(1,\lambda)}$. For this ansatz to work, we need to be able to solve the equations:
\begin{equation}\label{E3.b}
E_3(\lambda):=\lambda E_1(1,\lambda)-E_2(1,\lambda)=0,\quad E_1(1,\lambda)\ne0,\quad E_2(1,\lambda)\ne0\,.
\end{equation}

\begin{lemma}\label{L3.5} Normalize the coordinate system as in Lemma~\ref{L3.4}.
Then $E_1(1,\lambda)$ and $E_2(1,\lambda)$ have degree 2 in $\lambda$.
If $E_1(1,\lambda)$ and $E_2(1,\lambda)$ do not have a common real root, then the geometry of the underlying Type~$\mathcal{A}$
model is geodesically incomplete.
\end{lemma}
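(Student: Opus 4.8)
\textbf{Proof proposal for Lemma~\ref{L3.5}.} The plan is to solve Equation~(\ref{E3.b}) directly and then invoke Ansatz~\ref{A1.4}. First I would dispose of the degree assertion: with the coordinates normalized as in Lemma~\ref{L3.4} we have $C_{22}{}^1\neq0$ and $C_{22}{}^2\neq0$, and these are exactly the coefficients of $\lambda^2$ in $E_1(1,\lambda)=C_{11}{}^1+2\lambda C_{12}{}^1+\lambda^2C_{22}{}^1$ and $E_2(1,\lambda)=C_{11}{}^2+2\lambda C_{12}{}^2+\lambda^2C_{22}{}^2$, so both polynomials have degree exactly $2$. I would also record at the outset that the ansatz $\sigma(t)=(a,b)\log t$ with $a=0$ can only be trivial: Equation~(\ref{E3.a}) then forces $C_{22}{}^1b^2=0$, hence $b=0$ since $C_{22}{}^1\neq0$. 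Thus it suffices to look for solutions with $a\neq0$, and writing $b=\lambda a$ leads, as in the discussion preceding the lemma, precisely to the system~(\ref{E3.b}).

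The key object is the cubic $E_3(\lambda)=\lambda E_1(1,\lambda)-E_2(1,\lambda)$. Its coefficient of $\lambda^3$ is $C_{22}{}^1$, which is nonzero by the normalization, so $E_3$ is a genuine cubic and therefore has at least one real root $\lambda_0$. The only point that requires any attention is to verify that this root satisfies the two nonvanishing conditions $E_1(1,\lambda_0)\neq0$ and $E_2(1,\lambda_0)\neq0$ demanded by~(\ref{E3.b}), and here I would argue by contradiction, using both the hypothesis that $E_1(1,\cdot)$ and $E_2(1,\cdot)$ have no common real root and the full nonvanishing of the Christoffel symbols from Lemma~\ref{L3.4}. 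If $E_1(1,\lambda_0)=0$, then $E_3(\lambda_0)=0$ forces $E_2(1,\lambda_0)=0$, so $\lambda_0$ is a common real root, a contradiction. If instead $E_2(1,\lambda_0)=0$, then $E_3(\lambda_0)=0$ gives $\lambda_0E_1(1,\lambda_0)=0$; the subcase $E_1(1,\lambda_0)=0$ is excluded as before, leaving $\lambda_0=0$, whence $E_2(1,0)=C_{11}{}^2=0$, contradicting Lemma~\ref{L3.4}.

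With $\lambda_0$ in hand I would set $a:=E_1(1,\lambda_0)^{-1}\neq0$ and $b:=\lambda_0 a$; by construction $(a,b)$ solves~(\ref{E3.a}) and $(a,b)\neq(0,0)$, so $\sigma(t)=(a,b)\log t$ is a geodesic defined only on $(0,\infty)$, and by Ansatz~\ref{A1.4} the underlying Type~$\mathcal{A}$ model is geodesically incomplete. I do not expect a substantive obstacle anywhere; the one genuinely delicate spot is ruling out the degenerate root $\lambda_0=0$, which is why the strong normalization of Lemma~\ref{L3.4} (in particular $C_{11}{}^2\neq0$) is invoked rather than merely $C_{22}{}^1\neq0$. $\qed$
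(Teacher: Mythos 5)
Your proposal is correct and follows essentially the same route as the paper: exhibit a real root $\lambda_0$ of the cubic $E_3(\lambda)=\lambda E_1(1,\lambda)-E_2(1,\lambda)$ (nontrivial since its leading coefficient is $C_{22}{}^1\neq0$), use the no-common-root hypothesis together with $C_{11}{}^2\neq0$ to rule out $E_1(1,\lambda_0)=0$, $E_2(1,\lambda_0)=0$, and the degenerate root $\lambda_0=0$, and then produce the incomplete geodesic $(a,b)\log t$ via Ansatz~\ref{A1.4}. The only cosmetic difference is that the paper excludes $\lambda_0=0$ up front (noting $E_3(0)=-C_{11}{}^2\neq0$) whereas you fold it into the case analysis; the logic is the same.
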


\begin{proof} We have
\begin{equation}\label{E3.c}\begin{array}{l}
E_1(1,\lambda)=C_{11}{}^1+2\lambda C_{12}{}^1+\lambda^2 C_{22}{}^1,\\[0.05in]
E_2(1,\lambda)=C_{11}{}^2+2\lambda C_{12}{}^2+\lambda^2C_{22}{}^2,\\[0.05in]
E_3(\lambda)=-C_{11}{}^2+\lambda(C_{11}{}^1- 2 C_{12}{}^2)+\lambda^2(2C_{12}{}^1-C_{22}{}^2)+\lambda^3C_{22}{}^1\,.
\end{array}\end{equation}
Since $C_{22}{}^1\ne0$ and $C_{22}{}^2\ne0$, $E_1(1,\lambda)$ and $E_2(1,\lambda)$ have degree 2 in $\lambda$
while $E_3(\lambda)$ has degree $3$ in $\lambda$. Furthermore, since $C_{11}{}^1\ne0$
and $C_{11}{}^2\ne0$, $\lambda=0$ is not a root of $E_1(1,\lambda)$, $E_2(1,\lambda)$, or $E_3(\lambda)$.
Choose a real root $\lambda_0\ne0$ of the cubic $E_3(\lambda)$.
If $E_1(1,\lambda_0)=0$, then $E_2(1,\lambda_0)=0$ so $E_1(1,\lambda)$ and $E_2(1,\lambda)$ have a common root which is false.
If $E_2(1,\lambda_0)=0$, since $\lambda_0\ne0$, $E_1(1,\lambda_0)=0$ which again is false. Thus we can satisfy the conditions
of Equation~(\ref{E3.b}) which implies the geometry is geodesically incomplete.
\end{proof}

\subsection{The proof of Lemma~\ref{L3.1}}
Let $\mathcal{M}_{\mathcal{C}}$ be a Type~$\mathcal{A}$ model with $\operatorname{Rank}(\rho)=2$ such that
 $\mathcal{M}_{\mathcal{C}}$ is geodesically incomplete. We wish to show that $\mathcal{M}_{\mathcal{C}}$ is essentially
geodesically incomplete. Suppose to the contrary that there exists a Type~$\mathcal{A}$ surface $\mathcal{M}=(M,\nabla)$
which is geodesically complete and which is modeled on $\mathcal{M}_{\mathcal{C}}$ and argue for a contradiction. By passing
to the universal cover, we may assume $M$ is simply connected and therefore that any local affine Killing vector field extends
to a global affine Killing vector field.

Since the Ricci tensor has rank 2, Theorem~3.4 of Brozos-V\'azquez et.\ al \cite{BGGP16} shows that
$\mathfrak{K}(\mathcal{M}_{\mathcal{C}})=\operatorname{Span}_{\mathbb{R}}\{\partial_{x^1},\partial_{x^2}\}$.
Let $P\in M$, and let $\{\xi_1,\xi_2\}$ be a basis for $\mathfrak{K}(P)$. Extend the $\xi_i$ to globally defined affine Killing vector fields. Since $\dim\{\mathfrak{K}(Q)\}=2$ for any $Q\in M$, we conclude that
$\{\xi_1(Q),\xi_2(Q)\}$ is a basis for $\mathfrak{K}(Q)$; $\{\xi_1,\xi_2\}$ is a global frame for $TM$.
Since the model $\mathcal{M}_{\mathcal{C}}$ is incomplete, there exists a geodesic $\sigma(t)$ in $\mathcal{M}_{\mathcal{C}}$
which is not defined for all $t$; we suppose $(a,b)$ for $b<\infty$ to be a maximal parameter range. Copy a piece of $\sigma$ into $\mathcal{M}$
to define a geodesic $\sigma_{\mathcal{M}}$. We may assume, without loss of generality, that $\xi_i(\sigma_{\mathcal{M}}(0))=\partial_i$
and hence $\xi_i(\sigma_{\mathcal{M}})(t)=\partial_i$ for $t$ near $0$.
Since $\mathcal{M}$ is geodesically complete, the domain of $\sigma_{\mathcal{M}}$ is $\mathbb{R}$.
Expand $\dot\sigma_{\mathcal{M}}=\kappa_{1,\mathcal{M}}(t)\xi_1(\sigma(t))+\kappa_{2,\mathcal{M}}(t)\xi_2(\sigma(t))$.
The functions $\kappa_{i,\mathcal{M}}(t)$ are then
real analytic and defined on all of $\mathbb{R}$. Returning to the model $\mathcal{M}_{\mathcal{C}}$, we may expand
$\dot\sigma=\kappa_1(t)\partial_1+\kappa_2(t)\partial_2$; since we are working in the real analytic context and since
$\xi_i(\sigma_{\mathcal{M}})(0)=\partial_i$, $\kappa_i(t)=\kappa_{i,\mathcal{M}}(t)$ so $\kappa_i$ extends to a real analytic function on all of $\mathbb{R}$. In particular,
the $\kappa_i$ extend smoothly to $t=b$. Since
$$
\lim_{t\rightarrow b}\sigma(t)=\sigma(0)+\int_{t=0}^b(\kappa_1(t),\kappa_2(t))dt\,,
$$
we conclude that $\sigma$ is smooth at $t=b$ and hence $\sigma$ extends beyond $t=b$ which is false. This contradiction shows
that in fact $\mathcal{M}_{\mathcal{C}}$ is essentially geodesically incomplete.
\qed

\subsection{The proof of Lemma~\ref{L3.2}}

Let $\mathcal{M}_{\mathcal{C}}$ be a Type~$\mathcal{A}$ model with $\operatorname{Rank}\{\rho\}=2$ which
does not admit a geodesic of the form $\sigma(t)=(a,b)\log(t)$ for $(a,b)\ne(0,0)$. We must show that $\mathcal{M}_{\mathcal{C}}$
is linearly isomorphic to $\mathcal{M}_{-,\delta}$ for $0\le\delta<2$. Normalize the coordinate system as in Lemma~\ref{L3.4}.
The ansatz of Lemma~\ref{L3.5} fails and therefore $E_1(1,\lambda)$ and $E_2(1,\lambda)$ have a common root.
Suppose $E_1(1,\lambda_0)=0$ and $E_2(1,\lambda_0)=0$. We may factor
$$
E_1(1,\lambda_0)=C_{22}{}^1(\lambda-\lambda_0)(\lambda-\lambda_1)\text{ and }
E_2(1,\lambda_0)=C_{22}{}^2(\lambda-\lambda_0)(\lambda-\lambda_2)\,.
$$
We use Equation~(\ref{E3.c}) to determine the Christoffel symbols and express the geodesic equation in the form:
\begin{equation}\label{E3.d}
\ddot x^1+C_{22}{}^1(\dot x^2-\lambda_0\dot x^1)(\dot x^2-\lambda_1\dot x^1) =0\text{ and }
\ddot x^2+C_{22}{}^2(\dot x^2-\lambda_0\dot x^1)(\dot x^2-\lambda_2\dot x^1)=0\,.
\end{equation}
We set $u:=-\dot x^1$ and $v:=-\dot x^2$ to work in phase space. Equation~(\ref{E3.d}) can be rewritten in the form:
$$
\dot u=E_1(u,v)=\xi_1(u,v)\xi_2(u,v)\text{ and }\dot v=E_2(u,v)=\xi_1(u,v)\xi_3(u,v)$$
where $\xi_i(u,v)=\alpha_iu+\beta_iv$ and $(\alpha_i,\beta_i)\ne(0,0)$.
We may change variables so that $u_1=\xi_1(u,v)$ and $v_1$ is chosen suitably (we could, for example,
take $v_1=-\beta_1u+\alpha_1v$ but the choice is irrelevant for the moment). The geodesic equations become
$$
\dot u_1=u_1\eta_2(u_1,v_1)\text{ and }\dot v_1=u_1\eta_3(u_1,v_1)
\text{ where }\eta_i(u,v)=\alpha_{1,i}u+\beta_{1,i}v\,.
$$
Suppose that $\beta_{1,2}=0$ so in the new coordinate system we have that
$C_{12}{}^1=0$, and $C_{22}{}^1=0$. This implies
$\rho=(C_{11}{}^1 C_{12}{}^2+C_{11}{}^2 C_{22}{}^2-C_{12}{}^2C_{12}{}^2)dx^1\otimes dx^1$
which is false as $\rho$ has rank $2$. Consequently $\beta_{1,1}\ne0$ so
$\{u_1,\eta_2(u_1,v_1)\}$ are linearly independent linear functions. We change variables
setting $u_2=u_1$ and $v_2=\eta_2(u_1,v_1)$. This permits us to write the geodesic equations in the form:
$$
\dot u_2=u_2v_2\text{ and }\dot v_2=u_2(\tilde\alpha u_2+\tilde\beta v_2)\,.
$$
If $\tilde\alpha=0$ and $\tilde\beta=0$, then in the new coordinate system $C_{11}{}^2=0$ and $C_{12}{}^2=0$. This implies
$ \rho=(-C_{12}{}^1C_{12}{}^1+ C_{11}{}^1 C_{22}{}^1 + C_{12}{}^1 C_{22}{}^2)dx^2\otimes dx^2$, which contradicts our assumption that $\rho$ has rank 2. So if $\tilde\alpha=0$, then $\tilde\beta\neq 0$ and, by rescaling $u_2$ appropriately and dropping the subscripts to simplify the notation, the geodesic equations can be written in the form $\ddot x^1=\ddot x^2 = \dot x^1\dot x^2$, which admit the solution $\sigma(t)=(-1,-1)\log(t)$, in contradiction with our assumption. Thus $\tilde\alpha\neq 0$, the
geodesic equations have the form
$$\ddot x^1+\dot x^1\dot x^2=0\text{ and }\ddot x^2+\dot x^1(C_{11}{}^2\dot x^1+C_{12}{}^2\dot x^2)=0\,,$$
and we can rescale $x^1$ to ensure $C_{11}{}^2=\pm1$, showing that
$\mathcal{M}_{\mathcal{C}}$ is linearly isomorphic to $\mathcal{M}_{\pm,\delta}$.

We examine $\mathcal{M}_{+,\delta}$. Let $\sigma(t)=(a,1)\cdot\log(t)$. The geodesic equations become
$-a+a=0$ and $-1+a^2+\delta a=0$. We use the quadratic formula to solve the second equation setting
$a=\frac12(-\delta\pm\sqrt{\delta^2+4})$. Thus this possibility is eliminated.

We examine $\mathcal{M}_{-,\delta}$ with $\delta^2\ge4$. Let $\sigma(t)=(a,1)\cdot\log(t)$. This time the geodesic
equations become
$-a+a=0$ and $-1-a^2+\delta a=0$. The quadratic formula yields $a=\frac12( \delta\pm\sqrt{\delta^2-4})$. Thus this possibility
is eliminated if $\delta^2\ge4$.

Thus $\mathcal{M}_{\mathcal{C}}$ is linearly equivalent to $\mathcal{M}_{-,\delta}$ for $\delta^2<4$.
By replacing $x^1$ by $-x^1$ if necessary, we can always assume $\delta\ge0$ and thus $0\le\delta<2$.
\qed

\subsection{The proof of Lemma~\ref{L3.3}}
We must show that if $0\le\delta<2$, then $\mathcal{M}_{-,\delta}$ is geodesically complete.
Set $u=-\dot x^1$ and $v=-\dot x^2$ so we work
in phase space. Let $\mathfrak{X}(u,v):=(uv,u(-u+\delta v))$. Then $\sigma$ is a geodesic if and only if $(\dot u,\dot v)=\mathfrak{X}(u,v)$.
Thus we are examining the flow curves of the vector field $\mathfrak{X}$.
As a guide to the intuition, we present a picture of the flow curves of the vector field $\mathfrak{X}$ when $\delta=1$.
\smallbreak\centerline{\includegraphics[width=6cm,keepaspectratio=true]{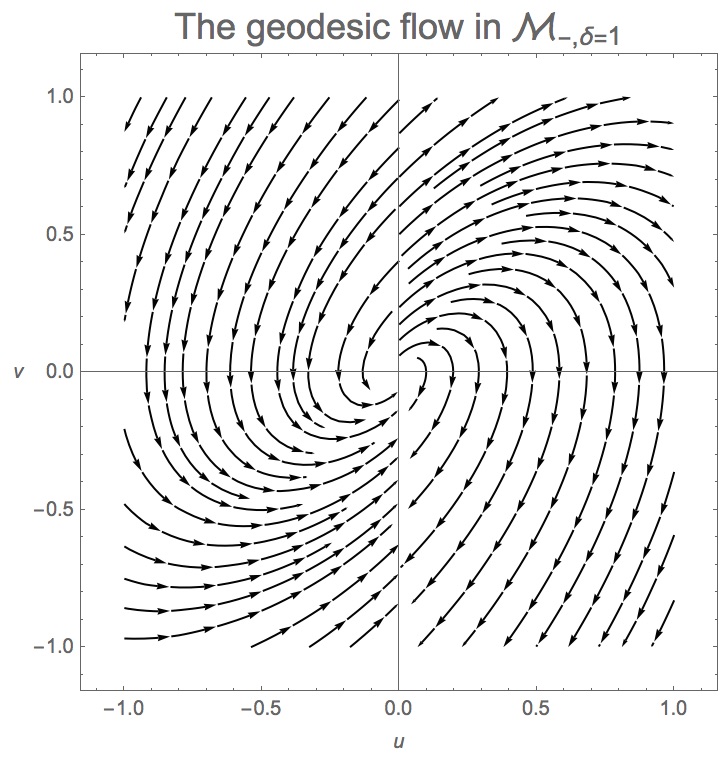}}
\medbreak\noindent Let $P\in\mathbb{R}^2$. By the Fundamental Theorem of Ordinary Differential Equations, there exists
a unique curve $\gamma=\gamma_P$ defined for $|t|<\varepsilon$ so that $\dot\gamma_P=\mathfrak{X}(\gamma_P)$ and
$\gamma_P(0)=P$. The corresponding geodesic $\sigma$ is then found by solving the ODE $\dot\sigma=\gamma$ with
an appropriate initial condition. Note that if $\mathfrak{X}(P)=0$, then we may take $\gamma_P(t)=P$ to be the constant curve. Consequently
if $\gamma(t)$ is a flow curve for $\mathfrak{X}$ and if $\mathfrak{X}(\gamma(t))=0$ for any $t$, then $\gamma(t)$ is the constant curve.
Note that $\mathfrak{X}(u,v)=0$ if and only if $u=0$. Thus flow curves can not cross the vertical axis; either $u(\gamma(t))>0$ or
$u(\gamma(t))<0$ for all $t$ in the domain or $(u(t),v(t))$ is constant.

If $v=0$, then $\dot v=-u^2$. Thus
$v$ is strictly monotonically decreasing near the horizontal axis. Once a flow curve has left the first quadrant, it is trapped in the fourth quadrant.
Similarly once a flow curve has left the second quadrant it is trapped in the third quadrant.
The picture given above suggests that positive vertical axis is a repulsive fixed point set and the negative vertical axis is an attractive fixed point set;
flow curves should exist for all time and pass from the positive vertical axis to the negative vertical axis. This is in fact the case as we now show.

Let $\alpha:=\frac{\dot v}{\dot u}=\frac{-u+\delta v}{v}$ be the slope of the flow curve. A direct computation shows that
$$\begin{array}{l}
\dot\alpha=v^{-2}\left\{(-\dot u+\delta\dot v)v-(-u+\delta v)\dot v\right\}\\[0.05in]
\quad=v^{-2}\left\{-uv^2+u^2(-u+\delta v)\right\}\\[0.05in]
\quad=v^{-2}u\left\{-v^2-u^2+ \delta uv\right\}\,.
\end{array}$$
The assumption that $0\le\delta<2$ permits us to estimate using the  Cauchy-Schwarz inequality that
$u(t)^2+v(t)^2- \delta u(t)v(t)\ge\epsilon v(t)^2$ for some $\epsilon>0$. Consequently:
\begin{equation}\label{E3.e}
\dot\alpha(t)\le -\varepsilon|u(t)|\text{ if }u(t)>0\text{ and }v(t)\ne0\,.
\end{equation}
We examine the behavior of the flow curves in each quadrant; the first quadrant is the most difficult to study. We suppose that $\gamma$ is a flow curve
for the vector field $(uv,u(-u+\delta v))$ with initial condition $\gamma(0)=(u_0,v_0)$. If $u_0=0$, then $\gamma$ is constant. Thus we assume
$u_0\ne0$. We assume $\gamma$ is incomplete and let $[0,T)$ be a maximal domain. If the range of $\gamma$ is trapped in a compact set $K$, then
there exist $t_n\rightarrow T$ so $\gamma(t_n)\rightarrow(u_T,v_T)$ exists. Since any flow curve with initial condition near $(u_T,v_T)$ extends for a
fixed minimal time, this would permit us to extend $\gamma$ past $t=T$ which is impossible. Thus $\gamma$ must escape to infinity.

\subsection*{Case 3.1. The first quadrant} We suppose that $u(0)=u_0>0$, $v(0)=v_0>0$, and $\alpha(0)=\alpha_0>0$.
We suppose these conditions pertain on all of $[0,T)$ and argue for a contradiction. In the first quadrant we have $\dot u>0$ and consequently $u$
is monotone increasing. We apply Equation~(\ref{E3.e}). The slope of the flow curve is monotonically decreasing.
The slope is positive by assumption when $t=0$.
Thus the flow line lies under the tangent line and we have an estimate of the form
\begin{equation}\label{E3.f}
v(t)< v_0+ \alpha_0u(t)\,.
\end{equation}
We have $u$ is monotone increasing. If $u$ remains bounded as $t\rightarrow T$, then Equation~(\ref{E3.f}) shows that $v$ is bounded from
above. Since by hypothesis $v$ is non-negative, both $u$ and $v$ are bounded so the curve is trapped in a compact region which is false.
Thus $\lim_{t\rightarrow T}u(t)=\infty$. We use Equation~(\ref{E3.f}) to estimate
$$
\textstyle \frac1{v(t)}>\frac1{v_0+\alpha_0u(t)}\text{ and }-\frac1{v(t)} <-\frac1{v_0+\alpha_0u(t)}\,.
$$
Since $u$ is monotone increasing, we can use the parameter $s=u(t)$. We have
\begin{eqnarray*}
\partial_s\alpha&=&\frac{\dot\alpha(t)}{\dot u(t)}\le-\frac\varepsilon{v(t)}\le-\frac\varepsilon{v_0+\alpha_0s},\\
\lim_{t\rightarrow T}\alpha(t)&=&\lim_{s\rightarrow\infty}\alpha(t(s))=\alpha(0)+\int_{s=u_0}^\infty\partial_s\alpha ds\\
& \leq&\alpha(0)-\int_{s=u_0}^\infty\frac\varepsilon{v_0+\alpha_0s}ds=-\infty\,.
\end{eqnarray*}
Thus it is not possible that $\alpha(t)>0$ for all $t\in[0,T)$ and we must have $\alpha(t)=0$ for some $t\in[0,T)$.

We restart the curve with $u(0)=u_0>0$, $v(0)=v_0>0$, and $\alpha(0)=\alpha_0\le0$. Since the slope is monotone decreasing,
we may restart the process and assume in fact $\alpha(0)<0$. Suppose that $v(t)>0$ on $[0,T)$. Since $\alpha$ is monotone decreasing,
this implies the curve is trapped in the triangle bounded by the positive vertical axis, the positive horizontal
axis, and the tangent line which has negative slope. This is impossible. Consequently, the curve crosses the positive horizontal axis
and escapes into the fourth quadrant where it is trapped.

\subsection*{Case 3.2. The fourth quadrant.} We have $\dot u=uv<0$ and $\dot v=u(-u+\delta v)<0$. Thus both $u$ and
$v$ are monotone decreasing. The slope of the tangent line is positive and decreasing. Thus the curve is trapped above the tangent line,
below the positive horizontal axis, and to the right of the negative vertical axis. This is a compact region so this is impossible.

\subsection*{Case 3.3. The second and third quadrants.} Suppose $u(t)<0$. We compute:
\begin{eqnarray*}
&&\partial_t\{u^2(t)+v^2(t)\}=2u(t)\dot u(t)+2v(t)\dot v(t)\\
&=&2u(t)u(t)v(t)+2v(t)(-u(t)u(t)+\delta u(t)v(t))=2\delta u(t)v(t)^2\le0\,.
\end{eqnarray*}
Thus the radial distance to the origin is non-increasing and the curve is trapped in a quarter circle which is impossible.
\qed

\begin{remark}\rm
The analysis of Bromberg et al.\ \cite{B05} gives a criterion for examining when a quadratic vector field is complete; this is
clearly relevant to the study we presented in Lemma~\ref{L3.3} and parallels the algorithm we used there. We chose
to present an independent derivation as the focus of this paper is quite different. We wished to show that the ansatz of
considering geodesics of the form $\sigma(t)=(a,b)\cdot\log(t)$ gave a complete answer to the question of geodesic completeness;
a Type-$\mathcal{A}$ model is geodesically incomplete if and only if there exists a geodesic of this form, i.e.\ Equation~(\ref{E1.a}) can
be satisfied for $(a,b)\ne0$. We also wished to study the relationship between geodesic incompleteness, essential geodesic incompleteness,
and symmetric geometry. Thus we needed a more refined geometric analysis than is presented by Bromberg et al.\ and, in any event,
we wished to keep this paper as self-contained as possible.
\end{remark}

\section{The moduli space}\label{S4}
The geometries $\mathcal{M}_{-,\delta}$ for $0\le\delta<2$ form a 1-parameter family of geodesically complete
models such that the Ricci tensor has rank 2 and signature $(1,1)$; thus these are distinct from the models $\mathcal{M}_2$ and $\mathcal{M}_3$
where the Ricci tensor has rank 1.
The moduli spaces of Type~$\mathcal{A}$ structures with non-singular Ricci tensor were examined in \cite{BGGP16a} where a complete
set of invariants was given. Let
$$
\check\rho_{ij}:=\Gamma_{ik}{}^l\Gamma_{jl}{}^k,\quad
\Sigma:=\operatorname{Tr}_\rho\{\check\rho\}=\rho^{ij}\check\rho_{ij},\quad
\Psi:=\det(\check\rho)/\det(\rho).
$$
Assume the Ricci tensor has rank 2.
By Lemma~\ref{L1.2} the structure group is $\operatorname{GL}(2,\mathbb{R})$. Consequently, $\Sigma$ and $\Psi$ are affine invariants.
It was shown in \cite{BGGP16a} that $(\Sigma,\Psi)$ together
with the signature of the Ricci tensor form a complete set of invariants for the associated moduli spaces. Thus, for example,
 two Type~$\mathcal{A}$ moduli spaces $\mathcal{M}_i$ with indefinite Ricci tensor are isomorphic
if and only if
$$(\Sigma(\mathcal{M}_1),\Psi(\mathcal{M}_1))=(\Sigma(\mathcal{M}_2),\Psi(\mathcal{M}_2))\,.
$$
We compute that $(\Sigma(\mathcal{M}_{-,\delta}),\Psi({\mathcal{M}_{-,\delta}}))=(-3+2\delta^2,2)$.
Thus, in particular, $\mathcal{M}_{-,\delta}$ is isomorphic to $\mathcal{M}_{-,\tilde\delta}$ if and only if $\delta=\pm\tilde\delta$.
Since the Ricci tensor of $\mathcal{M}_2$ is negative semi-definite, the Ricci tensor of $\mathcal{M}_3$ is positive semi-definite,
and the Ricci tensor of $\mathcal{M}_{-,\delta}$ has signature $(1,1)$, we may conclude that no two models in the family
$\{\mathcal{M}_2,\mathcal{M}_3,\mathcal{M}_{-,\delta}\}$ for $0\le\delta<2$ are locally isomorphic.
We show below the moduli space $\mathfrak{M}(1,1)$ of Type~$\mathcal{A}$ affine surfaces where the Ricci tensor is indefinite, i.e. has signature $(1,1)$.
The moduli space is the simply connected region of the plane which
is bounded on the left (resp. right) by the curve $\sigma_-(t)$ (resp. $\sigma_+(t)$) where
$$
\sigma_\pm(t):=\left(\pm4 t^2\pm\frac{1}{t^2}+2,4 t^4\pm4 t^2+2\right)\,.
$$
We indicate below the segment $(-3+\delta^2,2)$ for $0\le\delta<2$ as a thick black segment;
the far left hand endpoint corresponds to $\delta=0$. We have also shown below the segment $(-3+\delta^2,2)$ for $2\le\delta$ as a dashed curve;
it is an asymptotic lower bound to the bounding curve $\sigma_+(t)$; $\sigma_+(t)\rightarrow 2$ as $t\rightarrow0$.
\medbreak\centerline{\includegraphics[width=5cm,keepaspectratio=true]{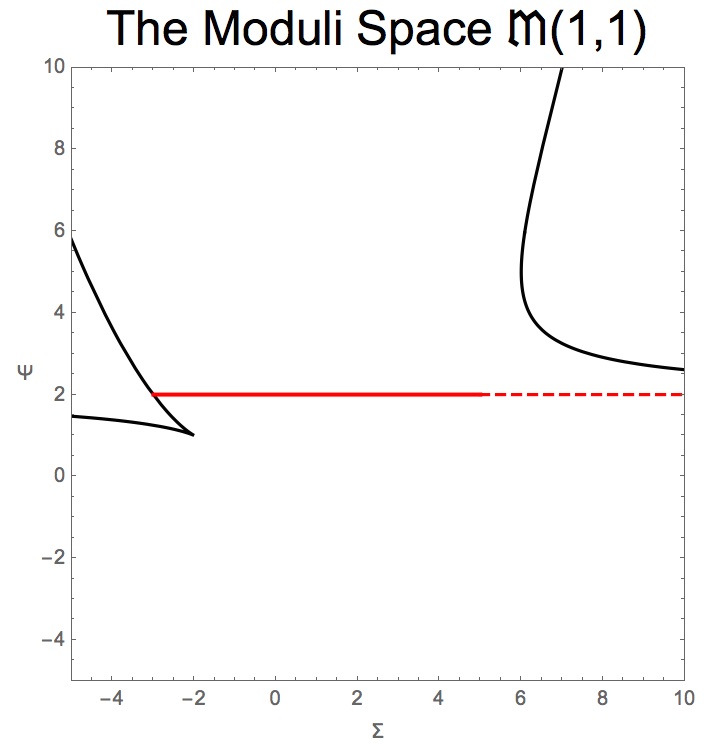}}

\subsection*{Acknowledgments}
Research of the authors was partially supported by project GRC2013-045 (Spain), by project PIP 1787/681 (CONICET, Argentina) and by project 11/X615 (UNLP, Argentina).

\end{document}